\newtheorem{theorem}{Theorem}[section]
\newtheorem{lemma}[theorem]{Lemma}
\newtheorem{proposition}[theorem]{Proposition}
\newtheorem{corollary}[theorem]{Corollary}
\theoremstyle{definition}
\newtheorem{example}{Example}
\journal{Fuzzy Sets and Systems}
\begin{document}

\begin{frontmatter}



\title{Tropical implementation of\\ the Analytical Hierarchy Process decision method}
\author{Nikolai Krivulin\corref{cork}\fnref{fnk}}

\author{Serge{\u{\i}} Sergeev\corref{cors}\fnref{fns}}

\cortext[cork]{Saint Petersburg State University, 7/9 Universitetskaya nab., Saint Petersburg, 199034, Russia. Corresponding author. Email: nkk@math.spbu.ru}

\cortext[cors]{University of Birmingham, School of Mathematics, Edgbaston B15 2TT, UK. Email: s.sergeev@bham.ac.uk}




\begin{abstract}
We apply methods and techniques of tropical optimization to develop a new theoretical and computational framework for the implementation of the Analytic Hierarchy Process in multi-criteria problems of rating alternatives from pairwise comparison data. In this framework, we first consider the minimax Chebyshev approximation of pairwise comparison matrices by consistent matrices in the logarithmic scale. Recasting this approximation problem as a problem of tropical pseudo-quadratic programming, we then write out a closed-form solution to it. This solution might be either a unique score vector (up to a positive factor) or a set of different score vectors. To handle the problem when the solution is not unique, we develop tropical optimization techniques of maximizing and minimizing the Hilbert seminorm to find those vectors from the solution set that are the most and least differentiating between the alternatives with the highest and lowest scores, and thus are well representative of the entire solution set.
\end{abstract}

\begin{keyword}
tropical optimization \sep max-algebra \sep pairwise comparison \sep log-Chebyshev approximation \sep Analytical Hierarchy Process \sep Hilbert distance
\MSC 90B50 \sep 15A80 \sep 90C47 \sep 41A50 \sep 15B48
\end{keyword}

\end{frontmatter}



\section{Introduction}
\label{s:intro}

Tropical (idempotent) mathematics, which deals with the theory and applications of algebraic systems with idempotent operations \cite{Golan2003Semirings,Heidergott2006Maxplus,Mceneaney2006Maxplus,Butkovic2010Maxlinear,Maclagan2015Introduction}, is widely used as a coherent analytical framework to solve problems in engineering, operations research and computer science. Tropical optimization presents an important research domain in this area, focused on optimization problems that are formulated and solved in the tropical mathematics setting.
Methods and techniques of tropical optimization are applied to solve many well-known and new optimization problems in various fields, including decision making \cite{Elsner2004Maxalgebra,Elsner2010Maxalgebra,Gursoy2013Analytic,Tran2013Pairwise,Gavalec2015Decision}. 

The traditional Analytic Hierarchy Process (AHP) method \cite{Saaty1977Scaling,Saaty1990Analytic,Saaty2013Onthemeasurement} consists of two principal levels of pairwise comparisons: the upper level, where the relative importance of criteria is estimated, and the lower level, where the relative quality of choices is evaluated with respect to each criterion. The final decision is made by combining the rates of all choices computed on the lower level, and the weights of all criteria on the higher level.

The rates of choices with respect to each criterion are found through the rank-one approximation of pairwise comparison matrices, typically by using the principal (Perron) eigenvector methods \cite{Saaty1977Scaling,Saaty1990Analytic,Saaty1984Comparison,Saaty2013Onthemeasurement}, and, sometimes, by other techniques, including the least squares or the logarithmic least squares methods \cite{Saaty1984Comparison,Chu1998Ontheoptimal,Saaty1984Comparison,Barzilai1997Deriving,Farkas2003Consistency}. The weights of criteria can be evaluated in the same manner from pairwise comparisons or obtained in a different way.

More specifically, assume that there are $m$ criteria and $n$ choices. Given a pairwise comparison matrix $A_{k}=(a_{ij}^{(k)})$ of order $n$, and a weight $w_{k}$ for each criterion $k$, the vector of the priorities of all choices $x=(x_{i})$ is calculated as
\begin{equation}
x
=
\sum_{k=1}^{m}w_{k}y_{k},
\label{e:AHP} 
\end{equation}
where $y_{k}$ is the vector of rates, obtained from $A_{k}$.

Since the first appearance of the AHP decision approach, several new implementations of AHP have been developed using various mathematical techniques, including fuzzy AHP \cite{Laarhoven1983Fuzzy,Kubler20163Sateoftheart}, interval AHP \cite{Ahn2017Analytic}, and some others. Specifically, in \cite{Gursoy2013Analytic} a variant of AHP based on tropical mathematics is proposed, using the matrix approximation in terms of the minimization of 
the maximum relative error \cite{Elsner2004Maxalgebra,Elsner2010Maxalgebra}. Based on the observation of \cite{Elsner2010Maxalgebra} that this minimum is attained by any tropical subeigenvector, the paper \cite{Gursoy2013Analytic} suggests to seek a common subeigenvector of pairwise comparison matrices $A^{(k)}$ for all criteria $k$ (giving a number of conditions for existence of that common subeigenvector) or, alternatively, to seek a Pareto optimal solution.

In this paper, we develop a new theoretical and computational framework for the implementation of AHP, based on the tropical optimization techniques proposed in \cite{Krivulin2015Rating,Krivulin2016Using}. The new AHP method, which we offer and investigate below, aims to find a rank-one matrix that should be the closest, in the sense of the maximum of weighted log-Chebyshev distances, to all the pairwise comparison matrices corresponding to different criteria. As we argue in Section~\ref{s:AHP-intro} the vector of priorities $x=(x_{i})$ is a solution of the following optimization problem
\begin{equation}
\begin{aligned}
&
\min_{x}
&&
\max_{i,j=1}^{n}\max_{k=1}^{m}(w_{k}a_{ij}^{(k)})x_{j}/x_{i}.
\end{aligned}
\label{P-minxwkaijkxjxi}
\end{equation}

The new approach is a modification of the direct weighted sum calculation in the basic AHP scheme \eqref{e:AHP}. In this modification, the weights of criteria are incorporated into the objective function of the optimization problem solved at the lower level of AHP. We thus minimize a max-linear combination of lower level objective functions (representing the consistency of judgment with respect to various criteria), in which they are multiplied by their proper weights taken from the higher level of AHP. 

The solution obtained as a result of the tropical AHP method is, in general, non-unique. However, inconsistency and ambiguity of determining the pairwise preferences are inherent to the process of forming the pairwise comparison matrices, and therefore it seems quite natural that the method ends up with a set of solution vectors rather than with just one vector as in the ordinary AHP.

To make the non-unique result tractable and useful for practice, we focus on two kinds of solutions that can be considered, in some sense, as the best and worst solutions. The solution set is characterized by vectors that are the most and least differentiating between the choices with the highest and lowest priorities. See Section~\ref{s:AHP-intro} and problem formulations \eqref{P-maxxmaxximax1xj}, \eqref{P-minxmaxximax1xj}.

Our next goal is then to describe the sets of most and least differentiating vectors in the framework of tropical mathematics, using some basic facts about the tropical linear algebra and the 
tropical pseudoquadratic optimization given in Section~\ref{s:elements}. Such description is obtained in Subsections~\ref{ss:maximization} and~\ref{ss:minimization}, and its geometric sense is then illustrated in Subsection~\ref{ss:geometry}. 

Note that the solution set of \eqref{P-minxwkaijkxjxi} is a tropical convex cone: a subset of nonnegative orthant, which is closed under multiplication by a scalar factor and componentwise maximum of two vectors. The problem of minimizing the Hilbert semidistance of a point from a tropical convex cone or, more generally, from an idempotent semimodule, was considered in \cite{Cohen2004Duality} and \cite{Akian2011Best}, where it was shown to be in a close relation with the tropical Hahn-Banach theorem. Below we will consider a special case of this problem where a simple algebraic description of the whole solution set will be available: see Subsection~\ref{ss:minimization}. 

The problem of finding the maximum of Hilbert seminorm \eqref{P-maxxmaxximax1xj} over the tropical column span of normal matrices\footnote{A square matrix is tropically normal if it has the diagonal entries equal to and the off-diagonal entries less than or equal to the tropical unit \cite{Butkovic2010Maxlinear}.} -- but without describing the whole solution set -- was solved in \cite{Delapuente2013Ontropical}. A complete solution of this problem that does not assume the normality or any other property of the matrix will be given in Subsection~\ref{ss:maximization}.

Examples of application of our tropical AHP framework are given in Section~\ref{s:AHP}, where we consider two multi-criteria decision problems from \cite{Saaty1977Scaling}. This is followed by a discussion of the differences between our approach and the traditional implementation of AHP, and some possibilities for further research. 

The main contribution of this paper is a new tropical framework for implementation of AHP. This implementation is based on complete description of the most and least differentiating vectors. In mathematical terms, this description results from a complete closed-form solution of two optimization problems over the tropical column span of a nonnegative matrix. We refine these solutions and provide a new geometric interpretation, which consists in maximization and minimization of the Hilbert seminorm over the tropical column span of a nonnegative matrix.

Let us mention that the new tropical implementation of AHP presented here has been outlined in our short conference paper \cite{Krivulin2017Tropicaloptimization}. That conference paper also presents an application of our new scheme to one of the examples described in Section 5 of the present paper. It also contains the formulation of some of the basic facts on which the new method is based, but without any proofs.

\section{Minimax approximation based AHP}
\label{s:AHP-intro}

In this section, we describe a new approach to develop an AHP decision scheme that is based on the rank-one log-Chebyshev approximation of pairwise comparison matrices. We also call it the tropical implementation of AHP since the tropical linearity is essential for closed-form description of solutions at each step.

\subsection{Log-Chebyshev approximation of pairwise comparison matrices}

Consider the problem of evaluating the rates of $n$ choices from pairwise comparisons of these choices. The outcome of these comparisons is described by a square symmetrically reciprocal matrix $A=(a_{ij})$, where $a_{ij}$ specifies the relative priority of choice $i$ over $j$, and satisfies the condition $a_{ij}=1/a_{ji}>0$ for all $i,j$. The pairwise comparison matrix $A$ is called consistent if its entries are transitive, that is, if they satisfy the equality $a_{ij}=a_{ik}a_{kj}$ for all $i,j,k$.

For each consistent matrix $A$, there is a positive vector $x=(x_{i})$ whose elements completely determine the entries of $A$ by the relation $a_{ij}=x_{i}/x_{j}$, which, in particular, means that $A$ is a matrix of unit rank. Provided that the matrix $A$ is consistent, its corresponding vector $x$, which can be readily obtained from $A$, directly represents, up to a positive factor, the individual preferences of choices in question.

Since the pairwise comparison matrices, encountered in practice, are generally inconsistent, the solution usually involves approximating these matrices by consistent matrices. The approximation with the principal (Perron) eigenvector as well as the least squares or the logarithmic least squares approximation are often used as solution approaches.

Consider another approach \cite{Krivulin2015Rating,Krivulin2016Using}, which is based on the approximation of a pairwise comparison matrix $A=(a_{ij})$ by a consistent matrix $X=(x_{ij})$ in the log-Chebyshev sense, where the approximation error is measured with the Chebyshev metric on the logarithmic scale. Taking into account that both matrices $A$ and $X$ have positive entries, and that the logarithmic function (to the base more than one) is monotonically increasing, the error can be written as
\begin{equation*}
\max_{i,j=1}^{n}|\log a_{ij}-\log x_{ij}|
=
\log\max_{i,j=1}^{n}\max\{a_{ij}/x_{ij},x_{ij}/a_{ij}\}.
\end{equation*}

Observing that the minimization of the logarithm is equivalent to the minimization of its argument, and that $a_{ij}=1/a_{ji}$ and $x_{ij}=x_{i}/x_{j}$, we replace the last logarithm by $\max_{i,j=1}^{n}\max\{a_{ij}/x_{ij},x_{ij}/a_{ij}\}=\max_{i,j=1}^{n}a_{ij}x_{j}/x_{i}$. This reduces solving the approximation problem to solving, with respect to the unknown vector of priorities $x=(x_{i})$, the optimization problem 
\begin{equation}
\min_{x}\ (\max_{i,j=1}^{n}a_{ij}x_{j}/x_{i}).
\label{P-minxaijxjxi}
\end{equation}

Note that problem \eqref{P-minxaijxjxi} is equivalent to that arising in the approximation by minimizing the maximum relative error in \cite{Elsner2004Maxalgebra,Elsner2010Maxalgebra}.

\subsection{Weighted approximation under several criteria}

Suppose the priorities of choices are evaluated based on pairwise comparisons according to $m$ criteria, each having a given weight. For each criterion $k$, we denote the pairwise comparison matrix by $A_{k}=(a_{ij}^{(k)})$ and the positive weight by $w_{k}$. To determine the priority vector $x=(x_{i})$, we minimize the maximum of the functions $\max_{i,j=1}^{n}a_{ij}^{(k)}x_{j}/x_{i}$, taken with the weights $w_{k}$ for all $k$. That is, we pose the following problem:
\begin{equation}
\min_{x}\ (\max_{k=1}^{m}w_{k}(\max_{i,j=1}^{n}a_{ij}^{(k)}x_{j}/x_{i}))
=
\min_{x}\ (\max_{i,j=1}^{n}\max_{k=1}^{m}(w_{k}a_{ij}^{(k)})x_{j}/x_{i}).
\label{P-weighted-approx}
\end{equation}

Introducing the matrix $B=(b_{ij})$ with the entries
\begin{equation}
b_{ij}
=
\max_{k=1}^{m}w_{k}a_{ij}^{(k)},
\label{E-bij-maxwkaijk}
\end{equation}
we can reduce \eqref{P-weighted-approx} to a problem in the form of \eqref{P-minxaijxjxi}, with $B$ instead of $A$. The solution of \eqref{P-weighted-approx} can be considered as a modification of the basic AHP scheme, in which the log-Chebyshev approximation is used instead of the principal eigenvector method, and the weights of criteria are incorporated into the lower-level evaluation of choices.

\subsection{Most and least differentiating priority vectors}
\label{ss:mostandleast}

In general, two priority vectors that solve problem \eqref{P-minxaijxjxi} or \eqref{P-weighted-approx} cannot necesarily be obtained from one another by means of multiplication by a positive factor. That is, solution of~\eqref{P-minxaijxjxi} or \eqref{P-weighted-approx} can be essentially non-unique, in general. Below, we develop an approach in which the entire solution is ``represented'' by two vectors, which can be considered, in some sense, as the best and worst solutions.

Assume that problem \eqref{P-minxaijxjxi} or \eqref{P-weighted-approx} has a set $S$ of solutions $x=(x_{i})$ rather than a unique one (up to a scalar factor multiplication). Since the main purpose of evaluating priorities is to differentiate between choices, we find the solutions that are the most and least differentiating between the choices with the highest and lowest priorities. The calculation of the most and least differentiating vectors involves determining the exact bounds for the contrast ratio
\begin{equation*}
(\max_{i=1}^{n}x_{i})/(\min_{j=1}^{n}x_{j})
=
(\max_{i=1}^{n}x_{i})(\max_{j=1}^{n}(1/x_{j}))
\end{equation*}
aiming to find the vectors $x$, which solve the problem of the Hilbert (span, range) seminorm maximization
\begin{equation}
\max_{x\in S}\ (\max_{i=1}^{n}x_{i})(\max_{j=1}^{n}(1/x_{j})),
\label{P-maxxmaxximax1xj}
\end{equation}
and the problem of the Hilbert seminorm minimization
\begin{equation}
\min_{x\in S}\ (\max_{i=1}^{n}x_{i})(\max_{j=1}^{n}(1/x_{j})).
\label{P-minxmaxximax1xj}
\end{equation}

The Hilbert seminorm of $x$ in the logarithmic scale is actually defined as $\log ((\max_{i=1}^{n}x_{i})(\max_{j=1}^{n}(1/x_{j})))$, but, for the sake of optimization, the logarithm can be omitted, since it is a monotone function.

In subsequent sections, we will treat problems \eqref{P-minxaijxjxi}, \eqref{P-maxxmaxximax1xj} and \eqref{P-minxmaxximax1xj} in terms of tropical mathematics, and give direct and explicit solutions, which are ready for immediate computation.

\section{Tropical linear algebra and tropical pseudo-quadratic programming}
\label{s:elements}

We start with a brief overview of basic definitions and notation of tropical (idempotent) algebra to provide a formal framework for describing tropical optimization techniques, used below in the development of tropical implementation of AHP. Further details on tropical mathematics can be found, e.g., in \cite{Golan2003Semirings,Heidergott2006Maxplus,Mceneaney2006Maxplus,Butkovic2010Maxlinear,Maclagan2015Introduction}.

\subsection{Tropical linear algebra}
\label{ss:maxalg}

We consider the set of non-negative reals $R_{+}$ equipped with two operations: addition $\oplus$, defined as $\max$, and multiplication $\otimes$, defined as the usual multiplication, with the neutral elements: zero $0$ and one $1$. Addition $\oplus$ is idempotent since $x\oplus x=\max(x,x)=x$ for each $x\in R_{+}$, multiplication distributes over addition $\oplus$ and is invertible, providing each $x>0$ with the inverse $x^{-1}$ such that $x\otimes x^{-1}=xx^{-1}=1$. The idempotent algebraic system $(R_{+},0,1,\oplus,\otimes)$ is commonly referred to as the tropical algebra or max-algebra, and denoted by $R_{\max}$.

In the tropical algebra, both addition and multiplication are monotone in their arguments, which means that the inequality $x\leq y$ implies the inequalities $x\oplus z\leq y\oplus z$ and $x\otimes z\leq y\otimes z$ for any 
$x,y,z\in R_{+}$. Moreover, the inequality $x\oplus y\leq z$ is equivalent to the system of inequalities $x\leq z$ and $y\leq z$. The inversion is antitone in the sense that if $x\leq y$ for some $x,y>0$, then $x^{-1}\geq y^{-1}$.

Since the multiplication $\otimes$ defined in $R_{\max}$ coincides with the standard arithmetic multiplication, the power notation $x^{p}$ has the usual interpretation for all $x>0$ and rational $p$. In what follows, the multiplication sign $\otimes$ is omitted for the sake of brevity. 

The scalar tropical algebra is routinely extended to the set of non-negative matrices over $R_{+}$ with the matrix operations defined by the conventional rules, where the scalar addition and multiplication are replaced by the operations $\oplus$ and $\otimes$. This is referred to as the tropical linear algebra.
As usual, a matrix with all zero entries is called the zero matrix.

The multiplicative conjugate transpose (or simply the conjugate transpose) of a nonzero $(m\times n)$-matrix $A=(a_{ij})$ is the $(n\times m)$-matrix $A^{-}=(a_{ij}^{-})$ with the entries $a_{ij}^{-}=a_{ji}^{-1}$ if $a_{ji}\ne0$, and $a_{ij}^{-}=0$ otherwise.

Any matrix that consists of one column is a column vector. The column vector with all zero entries is the zero vector $0$. The column vector with all entries equal to $1$ is denoted by $1$.

The conjugate transpose of a nonzero column vector $x=(x_{i})$ is the row vector $x^{-}=(x_{i}^{-})$, where $x_{i}^{-}=x_{i}^{-1}$ if $x_{i}\ne0$, and $x_{i}^{-}=0$ otherwise.  

The monotone properties of the scalar operations $\oplus$ and $\otimes$ are readily carried over to the matrix and vector operations, where the relations are understood entrywise. Specifically, for all positive matrices $A$ and $B$ such that $A\leq B$, the conjugate transposition satisfies $A^{-}\geq B^{-}$.

An $m$-vector $b$ is linearly dependent on $m$-vectors $a_{1},\ldots,a_{n}$ if there are non-negative numbers $x_{1},\ldots,x_{n}$ such that $b=x_{1}a_{1}\oplus\cdots\oplus x_{n}a_{n}$. Specifically, a vector $b$ is collinear with a vector $a$, if $b=xa$ for some scalar $x$.

A positive matrix $A$ is of rank $1$ if and only if $A=xy^{T}$, where $x$ and $y$ are positive column vectors. A matrix $A$ that satisfies the condition $A^{-}=A$ is called symmetrically reciprocal (or simply reciprocal). A reciprocal matrix $A$ is of rank $1$ if and only if $A=xx^{-}$, where $x$ is a positive column vector.

For any square matrix $A$ and integer $p>0$, the tropical (or max-algebraic) power notation is routinely defined by the inductive rule $A^{p}=A^{p-1}A$, where $A^{0}=I$ is the usual identity matrix.

The tropical (max-algebraic) spectral radius of an $(n\times n)$-matrix $A=(a_{ij})$ is computed as the maximum cycle geometric mean of the matrix entries, which is given by
\begin{equation}
\lambda
=
\bigoplus_{1\leq k\leq n}\bigoplus_{1\leq i_{1},\ldots,i_{k}\leq n}(a_{i_{1}i_{2}}a_{i_{2}i_{3}}\cdots a_{i_{k}i_{1}})^{1/k}
=
\mathop\mathrm{tr}A
\oplus\cdots\oplus
\mathop\mathrm{tr}\nolimits^{1/n}(A^{n}).
\label{E-lambda-ai1i2ai2i3aiki1}
\end{equation}

We also use the function, which maps the matrix $A$ onto the scalar
\begin{equation*}
\mathop\mathrm{Tr}(A)
=
\bigoplus_{m=1}^{n}\mathop\mathrm{tr}A^{m}
=
\mathop\mathrm{tr}A
\oplus\cdots\oplus
\mathop\mathrm{tr}A^{n},
\end{equation*}
and note that if $\lambda>0$ then the inequality $\mathop\mathrm{Tr}(\lambda^{-1}A)\leq1$ holds.

Provided that $\mathop\mathrm{Tr}(A)\leq1$, the asterate operator (the Kleene star) yields the matrix
\begin{equation*}
A^{\ast}
=
\bigoplus_{m=0}^{n-1}A^{m}
=
I\oplus A\oplus\cdots\oplus A^{n-1}.
\end{equation*}

Finally, we consider the problem to find positive vectors $x$ that solve the inequality
\begin{equation}
Ax\leq x.
\label{I-Axx}
\end{equation}

The next result obtained in \cite{Krivulin2015Extremal} offers a complete solution to this inequality (see also \cite{Butkovic2010Maxlinear} and references therein). 
\begin{lemma}
\label{L-Axx}
For any square matrix $A$, the following statements hold:
\begin{enumerate}
\item If $\mathop\mathrm{Tr}(A)\leq1$, then all positive solutions to \eqref{I-Axx} are given by $x=A^{\ast}u$, where $u$ is any positive vector.
\item If $\mathop\mathrm{Tr}(A)>1$, then there is no positive solution.
\end{enumerate}
\end{lemma}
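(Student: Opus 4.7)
The plan is to prove the two parts separately: first the parametrization when $\mathop\mathrm{Tr}(A) \leq 1$, and then the infeasibility when $\mathop\mathrm{Tr}(A) > 1$ by a direct contradiction argument.

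For part~1, I would establish both inclusions of the claimed set equality. To see that every $x = A^{\ast}u$ with positive $u$ solves $Ax \leq x$, the key reduction is to the matrix inequality $A A^{\ast} \leq A^{\ast}$. Expanding $A A^{\ast} = A \oplus A^{2} \oplus \cdots \oplus A^{n}$ and comparing with $A^{\ast} = I \oplus A \oplus \cdots \oplus A^{n-1}$, this amounts to showing $A^{n} \leq A^{\ast}$ under the hypothesis $\mathop\mathrm{Tr}(A) \leq 1$. I would prove this combinatorially: the $(i,j)$ entry of $A^{k}$ is the maximum weight of a walk of length $k$ from $i$ to $j$ in the weighted digraph of $A$, where weight means the product of edge weights. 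Any walk of length exactly $n$ on $n$ vertices must revisit some vertex, so by the pigeonhole principle it splits into a closed cycle of some length $c \in \{1,\ldots,n\}$ and a residual walk from $i$ to $j$ of length $n - c \leq n - 1$. Since $\mathop\mathrm{Tr}(A) \leq 1$ forces every cyclic product $a_{i_{1}i_{2}}\cdots a_{i_{c}i_{1}}$ to be at most one, the original walk's weight is dominated by the residual walk's weight, and hence by $(A^{n-c})_{ij} \leq (A^{\ast})_{ij}$. For the reverse inclusion, given a positive $x$ with $Ax \leq x$, iterating yields $A^{k}x \leq x$ for every $k \geq 1$; combined with $Ix = x$, this gives $A^{\ast} x \leq x$, and since $A^{\ast} \geq I$ entrywise also forces $A^{\ast} x \geq x$, we obtain $A^{\ast} x = x$, so the choice $u = x$ recovers $x$ in the required form.

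For part~2, I would argue by contradiction. A positive solution $x$ of $Ax \leq x$ implies $a_{ij} \leq x_{i}/x_{j}$ componentwise whenever $a_{ij} > 0$. Multiplying these inequalities along any closed cycle $i_{1} \to i_{2} \to \cdots \to i_{k} \to i_{1}$ makes the right-hand side telescope to $1$, so every cyclic product in $A$ is bounded above by one. By the definition of $\mathop\mathrm{Tr}(A)$ as a max over the traces $\mathop\mathrm{tr}(A^{m})$, each of which is itself a max of such cyclic products, this forces $\mathop\mathrm{Tr}(A) \leq 1$, contradicting the hypothesis.

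The main obstacle I anticipate is the cycle--residual decomposition step used to derive $A^{n} \leq A^{\ast}$; this is where the assumption $\mathop\mathrm{Tr}(A) \leq 1$ genuinely enters and where the underlying graph combinatorics must be handled with some care. Once that inequality is in hand, the remaining arguments reduce to monotonicity of $\oplus$ and $\otimes$ together with a routine telescoping along cycles.
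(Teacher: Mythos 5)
Your argument is correct and complete: the walk--cycle decomposition gives $A^{n}\leq A^{\ast}$, hence $AA^{\ast}\leq A^{\ast}$ and the forward inclusion; iterating $Ax\leq x$ together with $A^{\ast}\geq I$ gives $A^{\ast}x=x$ for the reverse inclusion; and the telescoping along cycles settles part~2. The paper itself gives no proof of this lemma, citing Krivulin (2015) and Butkovi\v{c}'s monograph instead, and your proof is essentially the standard argument found there, so there is nothing to correct beyond noting explicitly that $x=A^{\ast}u\geq u>0$ is automatically positive.
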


Below, we use the algebraic preliminaries introduced above to describe tropical optimization problems and their solutions.

\subsection{Tropical pseudo-quadratic programming}
\label{ss:pseudoquad}

In this section, we consider optimization problems, which are formulated and solved in the tropical algebra setting, to provide the basis for our tropical implementation of AHP. 

First, assume that, given a non-negative $(n\times n)$-matrix $A$, we need to find positive $n$-vectors $x$ that solve the problem
\begin{equation}
\begin{aligned}
&
\min_{x}
&&
x^{-}Ax.
\end{aligned}
\label{P-minxxAx}
\end{equation}

A complete, direct solution to the problem was obtained in \cite{Krivulin2015Extremal} (see also \cite{Butkovic2010Maxlinear} and references therein).
\begin{lemma}
\label{L-minxxAx}
Let $A$ be a matrix with tropical spectral radius $\lambda>0$. Then, the optimal value in problem \eqref{P-minxxAx} is equal to $\lambda$, and all positive solutions are given by
\begin{equation*}
x
=
(\lambda^{-1}A)^{\ast}u,
\quad
u>0.
\end{equation*}
\end{lemma}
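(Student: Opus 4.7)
The plan is to establish a lower bound $x^{-}Ax\geq\lambda$ for every positive $x$, then show that this bound is attained exactly on the solutions of the inequality $(\lambda^{-1}A)x\leq x$, and finally invoke Lemma~\ref{L-Axx} to parametrize that solution set.

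First I would unwind the scalar product: $x^{-}Ax=\bigoplus_{i,j} a_{ij}x_{j}x_{i}^{-1}$. For any cycle of indices $i_{1},i_{2},\ldots,i_{k},i_{1}$, each factor $a_{i_{s}i_{s+1}}x_{i_{s+1}}x_{i_{s}}^{-1}$ is dominated by $x^{-}Ax$, and their product telescopes in the $x$-variables to the cycle weight $a_{i_{1}i_{2}}a_{i_{2}i_{3}}\cdots a_{i_{k}i_{1}}$. Hence $(x^{-}Ax)^{k}\geq a_{i_{1}i_{2}}\cdots a_{i_{k}i_{1}}$, and taking the $k$-th root and maximizing over all cycle lengths $k$ and all index tuples yields $x^{-}Ax\geq\lambda$ in view of the formula \eqref{E-lambda-ai1i2ai2i3aiki1}.

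Next I would observe the basic equivalence: for positive $x$, the inequality $x^{-}Ax\leq\mu$ holds if and only if $a_{ij}x_{j}\leq\mu x_{i}$ for all $i,j$, which is nothing other than the vector inequality $Ax\leq\mu x$. Choosing $\mu=\lambda$ and dividing by $\lambda$ (which is positive by hypothesis), the condition becomes $(\lambda^{-1}A)x\leq x$. Because $\mathop\mathrm{Tr}(\lambda^{-1}A)\leq 1$ as noted right after the definition of $\mathop\mathrm{Tr}$, Lemma~\ref{L-Axx} applies and gives the full description $x=(\lambda^{-1}A)^{\ast}u$ with $u>0$ of all positive vectors satisfying $x^{-}Ax\leq\lambda$. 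Combined with the lower bound from the first step, these are exactly the minimizers and the optimal value equals $\lambda$.

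The only step that requires any care is the cycle argument for the lower bound; the rest is essentially an exercise in the monotonicity properties of $\oplus$ and $\otimes$ recorded in Subsection~\ref{ss:maxalg} and a direct invocation of Lemma~\ref{L-Axx}. It is also worth checking that every vector $x=(\lambda^{-1}A)^{\ast}u$ with $u>0$ is indeed positive (so that $x^{-}$ makes sense), which follows from $(\lambda^{-1}A)^{\ast}\geq I$ together with $u>0$.
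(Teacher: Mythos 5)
Your proof is correct and complete. Note that the paper itself does not prove Lemma~\ref{L-minxxAx}: it is quoted as a known result from \cite{Krivulin2015Extremal}, so there is no in-paper argument to compare against. Your reconstruction is exactly the standard route taken in that reference: the cycle-telescoping bound gives $(x^{-}Ax)^{k}\geq a_{i_{1}i_{2}}\cdots a_{i_{k}i_{1}}$ for every cycle, hence $x^{-}Ax\geq\lambda$ by \eqref{E-lambda-ai1i2ai2i3aiki1}; the observation that $x^{-}Ax\leq\lambda$ is equivalent to $(\lambda^{-1}A)x\leq x$ reduces the description of the minimizers to the subeigenvector inequality; and since $\mathop\mathrm{Tr}(\lambda^{-1}A)\leq1$, Lemma~\ref{L-Axx} parametrizes all positive solutions as $x=(\lambda^{-1}A)^{\ast}u$, $u>0$. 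Your closing check that $(\lambda^{-1}A)^{\ast}u\geq u>0$ also settles attainment (the solution set is nonempty), so the optimal value is indeed $\lambda$; I see no gaps.
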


We now suppose that $A_{1},\ldots,A_{m}$ are given non-negative $(n\times n)$-matrices, and $w_{1},\ldots,w_{m}$ are given positive numbers. The problem is to find positive $n$-vectors $x$ that attain the minimum in
\begin{equation}
\begin{aligned}
&
\min_{x}
&&
\bigoplus_{k=1}^{m}w_{k}x^{-}A_{k}x.
\end{aligned}
\label{P-minxwkxAkx}
\end{equation}

As a direct consequence of the previous result, we have the following solution \cite{Krivulin2016Using}.
\begin{corollary}
\label{C-minxwkxAkx}
Let $A_{1},\ldots,A_{m}$ be non-negative matrices and $w_{1},\ldots,w_{m}$ be positive numbers such that the matrix $B=w_{m}A_{m}\oplus\cdots\oplus w_{m}A_{m}$ has the tropical spectral radius $\mu>0$.

Then, the minimum value in \eqref{P-minxwkxAkx} is equal to $\mu$, and all positive solutions are given by
\begin{equation*}
x
=
(\mu^{-1}B)^{\ast}u,
\quad
u>0.
\end{equation*}
\end{corollary}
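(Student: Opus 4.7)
The plan is to reduce the statement directly to Lemma~\ref{L-minxxAx} by collapsing the outer max-sum of weighted pseudo-quadratic forms into a single pseudo-quadratic form over the combined matrix $B$.

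First, I would use the scalar tropical arithmetic to rewrite each summand. Since $w_{k}$ is a positive scalar, it commutes with matrix entries and with the scalar product $x^{-}A_{k}x$, so $w_{k}x^{-}A_{k}x = x^{-}(w_{k}A_{k})x$. This is just ordinary associativity of multiplication (recall that tropical multiplication $\otimes$ is the usual product on $R_{+}$), extended entrywise to matrices.

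Next, I would apply the distributivity of tropical multiplication over tropical addition $\oplus$ at the matrix level. Because $x^{-}(\cdot)x$ is a bilinear form in the tropical sense, we have
\begin{equation*}
\bigoplus_{k=1}^{m} x^{-}(w_{k}A_{k})x
=
x^{-}\!\left(\bigoplus_{k=1}^{m} w_{k}A_{k}\right)\!x
=
x^{-}Bx.
\end{equation*}
Thus problem \eqref{P-minxwkxAkx} is literally the same as problem \eqref{P-minxxAx} written for the matrix $B$.

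Finally, I would invoke Lemma~\ref{L-minxxAx} applied to $B$: since $B$ has tropical spectral radius $\mu>0$ by hypothesis, the minimum value equals $\mu$ and the set of all positive minimizers is exactly $\{(\mu^{-1}B)^{\ast}u : u>0\}$, which is the claimed description. There is no genuine obstacle here; the only thing to be careful about is the matrix-level distributivity step, which has to be justified by checking it entrywise, but this is immediate from the definitions in Subsection~\ref{ss:maxalg}.
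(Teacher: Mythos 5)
Your proof is correct and follows exactly the route the paper intends: the paper presents this corollary as an immediate consequence of Lemma~\ref{L-minxxAx} obtained by observing that $\bigoplus_{k}w_{k}x^{-}A_{k}x=x^{-}Bx$ with $B=\bigoplus_{k}w_{k}A_{k}$ (the statement's ``$w_{m}A_{m}\oplus\cdots\oplus w_{m}A_{m}$'' is a typo for this), which is precisely your distributivity argument.
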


Furthermore, given a matrix and two vectors, we examine two problems, which take the form of an unconstrained maximization and a constrained minimization problems. Let $A=(a_{j})$ be a non-negative $(m\times n)$-matrix with columns $a_{j}=(a_{ij})$, and $p=(p_{i})$ be an $m$-vector and $q=(q_{j})$ an $n$-vector. Consider the problem to find positive vectors $x=(x_{j})$ that attain the maximum
\begin{equation}
\begin{aligned}
&
\max_{x}
&&
q^{-}x(Ax)^{-}p.
\end{aligned}
\label{P-maxxqxAxp}
\end{equation}

The next result obtained in \cite{Krivulin2016Maximization,Krivulin2017Algebraic} offers a complete solution to problem \eqref{P-maxxqxAxp} under fairly general conditions.
\begin{proposition}
\label{L-maxxqxAxp}
Let $A$ be a positive matrix, $p$ be a nonzero vector, $q$ be a positive vector, and $\Delta=q^{-}A^{-}p$. Let $A_{lk}$ be the matrix obtained from $A$ by keeping the entry $a_{lk}$ for some indices $l$ and $k$, and replacing the other entries by zero.

Then, the optimal value in problem \eqref{P-maxxqxAxp} is equal to $\Delta$, and all positive solutions are given by the conditions
\begin{equation*}
x
=
(I\oplus A_{lk}^{-}A)u,
\quad
u>0,
\end{equation*}
for all indices $k$ and $l$ defined by the condition
\begin{equation*}
k
=
\arg\max_{j=1}^{m}q_{j}^{-1}a_{j}^{-}p,
\qquad
l
=
\arg\max_{i=1}^{n}a_{ik}^{-1}p_{i}.
\end{equation*}
\end{proposition}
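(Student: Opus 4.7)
The overall plan is three-fold: first I would prove the upper bound $q^-x(Ax)^-p\le\Delta$ holds for every positive $x$, then verify that every vector of the prescribed form attains this bound, and finally show that, conversely, every positive optimizer can be written in that form for some valid pair $(k,l)$. Unpacking tropical operations by the identities $q^-x=\bigoplus_{j}q_{j}^{-1}x_{j}$ and $(Ax)^-p=\bigoplus_{i}p_{i}/(Ax)_{i}$, the product of these two maxima equals a double maximum, so
\begin{equation*}
q^-x(Ax)^-p
=
\bigoplus_{i,j}q_{j}^{-1}x_{j}\,p_{i}/(Ax)_{i}.
\end{equation*}
From $(Ax)_{i}\ge a_{ij}x_{j}$ we get $x_{j}/(Ax)_{i}\le a_{ij}^{-1}$ for each pair $(i,j)$, and taking $\oplus$ over $i,j$ bounds this expression above by $\bigoplus_{i,j}q_{j}^{-1}p_{i}/a_{ij}=q^-A^-p=\Delta$. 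This settles the upper bound.

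For the attainment step, I would fix a pair $(k,l)$ satisfying the arg-max conditions so that $\Delta=q_{k}^{-1}p_{l}/a_{lk}$, and analyse $x=(I\oplus A_{lk}^{-}A)u$ with $u>0$. Because $A_{lk}^{-}$ has a unique nonzero entry $a_{lk}^{-1}$ in position $(k,l)$, one finds $x_{j}=u_{j}$ for $j\ne k$ and $x_{k}=a_{lk}^{-1}(Au)_{l}\ge u_{k}>0$; positivity of $A$ and $u$ ensures $x>0$. A short computation (using $a_{lk}x_{k}=(Au)_{l}\ge a_{lj}u_{j}$ for $j\ne k$) gives $(Ax)_{l}=(Au)_{l}=a_{lk}x_{k}$, so that
\begin{equation*}
q^-x(Ax)^-p
\ge
q_{k}^{-1}x_{k}\,p_{l}/(Ax)_{l}
=
q_{k}^{-1}p_{l}/a_{lk}
=
\Delta,
\end{equation*}
matching the upper bound.

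The subtle direction is the converse. For any positive optimizer $x$, pick indices $k,l$ that attain the maxima in $q^-x=q_{k}^{-1}x_{k}$ and $(Ax)^-p=p_{l}/(Ax)_{l}$. Then
\begin{equation*}
\Delta
=
q_{k}^{-1}x_{k}\,p_{l}/(Ax)_{l}
\le
q_{k}^{-1}p_{l}/a_{lk}
\le
\Delta,
\end{equation*}
forcing equality in both bounds: this simultaneously shows that $(k,l)$ is a valid arg-max pair (so the formula of the proposition applies with these indices) and that $(Ax)_{l}=a_{lk}x_{k}$. The last identity means $a_{lj}x_{j}\le a_{lk}x_{k}$ for all $j$, which is exactly the condition needed to set $u_{j}=x_{j}$ for all $j$ and recover $x=(I\oplus A_{lk}^{-}A)u$: indeed $a_{lk}^{-1}(Au)_{l}=a_{lk}^{-1}\max(a_{lk}x_{k},\max_{j\ne k}a_{lj}x_{j})=x_{k}$. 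This $u$ is positive because $x$ is. The main obstacle here is precisely this reconstruction of a positive $u$: one has to verify that the ``extremality'' conditions $(Ax)_{l}=a_{lk}x_{k}$ and the argmax property of $(k,l)$ are not only necessary but also exactly what the parametrization $I\oplus A_{lk}^{-}A$ encodes, so that the solution set decomposes as the union of the affine-like pieces indexed by the valid pairs $(k,l)$.
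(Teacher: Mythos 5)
Your proof is correct and follows essentially the same route as the paper: the upper bound $q^{-}x(Ax)^{-}p\le\Delta$ (the paper derives it by conjugating $Axx^{-}\ge A$, you derive the same inequality entrywise), attainment at a pair $(k,l)$ with $q_{k}^{-1}a_{lk}^{-1}p_{l}=\Delta$, and the converse by forcing equality in the chain $q_{k}^{-1}x_{k}(Ax)_{l}^{-1}p_{l}\le q_{k}^{-1}a_{lk}^{-1}p_{l}\le\Delta$, exactly as in the proof of Theorem~\ref{T-maxxqxAxp}. The only difference is that you fold the translation between the extremality condition $(Ax)_{l}=a_{lk}x_{k}$ and the parametric form $x=(I\oplus A_{lk}^{-}A)u$ directly into both directions, which the paper does separately in the proof of Corollary~\ref{C-maxx1AxAx1}.
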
 

Finally, suppose that we need to find positive solutions of the constrained minimization problem
\begin{equation}
\begin{aligned}
&
\min_{x}
&&
q^{-}xx^{-}p,
\\
&&&
Ax
\leq
x.
\end{aligned}
\label{P-minxqxxp-Axx}
\end{equation} 

A complete solution was obtained in \cite{Krivulin2017Tropical}, and it can be described as follows.
\begin{proposition}
\label{L-minxqxxp-Axx}
Let $A$ be a matrix such that $\mathop\mathrm{Tr}(A)\leq1$, $p$ be a nonzero vector, $q$ be a positive vector, and $\delta=q^{-}A^{\ast}p$. 

Then, the optimal value in problem \eqref{P-minxqxxp-Axx} is equal to $\delta$, and all regular solutions are given by
\begin{equation*}
x
=
(\delta^{-1}pq^{-}\oplus A)^{\ast}u,
\quad
u>0.
\end{equation*}
\end{proposition}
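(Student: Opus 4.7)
The plan is to convert the constrained optimization into a single max-linear matrix inequality via a pencil-matrix trick, apply Lemma~\ref{L-Axx} to obtain a parametric description of the solution set, and then identify the optimal parameter from an expansion of the tropical trace.

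\textbf{Reformulation.} First I would show that for any fixed $\theta>0$ and any positive $x$, the conjunction of $Ax\leq x$ and $q^{-}xx^{-}p\leq\theta$ is equivalent to the single vector inequality
\begin{equation*}
(\theta^{-1}pq^{-}\oplus A)\,x
\leq
x.
\end{equation*}
The forward direction follows by rewriting $q^{-}xx^{-}p\leq\theta$ componentwise as $(q^{-}x)p_{i}\leq\theta x_{i}$, which is precisely $\theta^{-1}pq^{-}x\leq x$, and combining with $Ax\leq x$ via the elementary rule $u\oplus v\leq w\Leftrightarrow u\leq w,\ v\leq w$. The reverse direction splits the join and undoes these manipulations.

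\textbf{Solution set.} Write $M(\theta)=\theta^{-1}pq^{-}\oplus A$. By Lemma~\ref{L-Axx}, the combined inequality has positive solutions iff $\mathop\mathrm{Tr}(M(\theta))\leq 1$, and these are exactly $M(\theta)^{\ast}u$ with $u>0$. Consequently, the optimal value of the original problem equals the smallest $\theta>0$ making $\mathop\mathrm{Tr}(M(\theta))\leq 1$, and the minimizers at that $\theta$ are $M(\theta)^{\ast}u$.

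\textbf{Trace computation.} To show that this least value equals $\delta=q^{-}A^{\ast}p$, I would expand $M(\theta)^{k}$ as a tropical join over words of length $k$ in the alphabet $\{\theta^{-1}pq^{-},\,A\}$. Using cyclicity of the tropical trace together with the rank-one collapse $pq^{-}A^{j}pq^{-}=(q^{-}A^{j}p)\cdot pq^{-}$, a word containing $s\geq 1$ copies of $\theta^{-1}pq^{-}$ contributes a trace of the form $\theta^{-s}\prod_{i=1}^{s}(q^{-}A^{d_{i}}p)$ with $\sum d_{i}=k-s$, while the all-$A$ word contributes $\mathop\mathrm{tr}(A^{k})$. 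Since $\mathop\mathrm{Tr}(A)\leq 1$ by hypothesis, the condition $\mathop\mathrm{Tr}(M(\theta))\leq 1$ reduces to $\theta^{-s}\prod(q^{-}A^{d_{i}}p)\leq 1$ over all admissible $s,d_{i}$. The binding constraints come from the $s=1$ terms, requiring $\theta\geq q^{-}A^{j}p$ for $j=0,\ldots,n-1$; by $A^{\ast}=I\oplus A\oplus\cdots\oplus A^{n-1}$ this is the single inequality $\theta\geq\delta$. The $s\geq 2$ contributions are then automatic, since each factor is bounded by $\delta\leq\theta$. Setting $\theta=\delta$ then yields both the optimal value and the parametrization $x=(\delta^{-1}pq^{-}\oplus A)^{\ast}u$.

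The main obstacle I expect is the trace expansion step: one must carefully justify the word enumeration for the powers of a pencil consisting of a full matrix and a rank-one matrix, apply the rank-one collapse inside the trace, and verify that the higher-order ($s\geq 2$) contributions cannot impose a tighter bound than $\theta\geq\delta$. The truncation to $d_{i}\leq n-1$ follows from the cap $k\leq n$ in the definition of $\mathop\mathrm{Tr}$ together with the Kleene-star identity for $A^{\ast}$.
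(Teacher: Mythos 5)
Your proposal is correct, and it is worth noting that the paper itself gives no proof of Proposition~\ref{L-minxqxxp-Axx}: the statement is imported from \cite{Krivulin2017Tropical}. Your argument is essentially the parametric-threshold technique underlying that reference, reconstructed from Lemma~\ref{L-Axx} alone: the equivalence of $Ax\leq x$, $q^{-}xx^{-}p\leq\theta$ with $(\theta^{-1}pq^{-}\oplus A)x\leq x$ is exactly right (for positive $x$, $q^{-}xx^{-}p\leq\theta$ componentwise is $\theta^{-1}pq^{-}x\leq x$), and the trace computation goes through as you sketch. In particular, the word expansion of $(\theta^{-1}pq^{-}\oplus A)^{k}$ is legitimate by distributivity; cyclicity of the tropical trace and the rank-one collapse $pq^{-}A^{j}pq^{-}=(q^{-}A^{j}p)\,pq^{-}$ give $\mathop{\mathrm{tr}}$ of a word with $s\geq1$ rank-one letters equal to $\theta^{-s}\prod_{i}(q^{-}A^{d_{i}}p)$ with $\sum d_{i}=k-s$, so each $d_{i}\leq n-1$; the $s=1$ terms force $\theta\geq q^{-}A^{d}p$ for $0\leq d\leq n-1$, i.e.\ $\theta\geq\delta$, while for $s\geq2$ each factor is itself at most $\delta$, so those constraints are dominated. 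Hence $\mathop{\mathrm{Tr}}(\theta^{-1}pq^{-}\oplus A)\leq1$ iff $\theta\geq\delta$, which simultaneously settles attainment of the minimal threshold and yields the optimal value and the parametrization via Lemma~\ref{L-Axx}. Two small points to make explicit: $\delta\geq q^{-}p>0$ (since $q$ is positive and $p$ nonzero), so $\delta^{-1}$ is well defined; and the identification of the argmin set with $\{x>0\colon Ax\leq x,\ q^{-}xx^{-}p\leq\delta\}$ uses that no feasible $x$ can have value $\theta<\delta$, since such an $x$ would be a positive solution of \eqref{I-Axx} for $\theta^{-1}pq^{-}\oplus A$, contradicting $\mathop{\mathrm{Tr}}>1$ there — a one-line remark that your $\theta\geq\delta$ characterization already makes immediate.
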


\section{Maximizing and minimizing the Hilbert seminorm}
\label{s:math}

In this section, we further simplify and refine the solutions of problems \eqref{P-maxxqxAxp} and \eqref{P-minxqxxp-Axx} to make them more appropriate for use in the tropical AHP below. We give a new more simple proof for the statement of Proposition~\ref{L-maxxqxAxp} for the maximization problem, and then obtain a compact closed-form solution of the special cases of maximization problem \eqref{P-maxxqxAxp} and minimization problem \eqref{P-minxqxxp-Axx}. The geometric sense of these problems and their solutions is then illustrated in Subsection~\ref{ss:geometry}.

\subsection{Maximization problem}
\label{ss:maximization}

We start with the derivation of a new more compact solution to the maximization problem given by \eqref{P-maxxqxAxp}. 

First note that, without loss of generality, we can consider the vector $p$ to be positive. If $p$ has zero components, then these components can be eliminated together with the corresponding rows of the matrix $A$, which yields an equivalent problem in the form of \eqref{P-maxxqxAxp} with a positive vector $p$.

The next statement establishes the maximum value of the objective function in problem \eqref{P-maxxqxAxp} and describes all vectors $x$ that yield this minimum. The derivation of the upper bound for the objective function is taken from \cite{Krivulin2016Maximization} and included in the proof below for the sake of completeness.  
 
\begin{theorem}
\label{T-maxxqxAxp}
Let $A$ be a positive matrix, $p$ be a nonzero vector, $q$ be a positive vector, and $\Delta=q^{-}A^{-}p$. Then, the optimal value in problem \eqref{P-maxxqxAxp} is equal to $\Delta$, and all positive solutions are given by
\begin{equation}
\bigoplus_{j=1}^{n}a_{lj}x_{j}
=
a_{lk}x_{k}
\label{E-aljxjalkxk}
\end{equation}
for all indices $k$ and $l$ defined by the condition
\begin{equation}
q_{k}^{-1}a_{lk}^{-1}p_{l}
=
\Delta.
\label{E-qkalkplqAp}
\end{equation}
\end{theorem}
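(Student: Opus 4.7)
My plan would be to prove the theorem by establishing the upper bound $\Delta$ on the objective in one short line, and then extracting necessary and sufficient conditions for optimality by tracing exactly when each inequality in the bounding chain becomes an equality. This yields a direct algebraic characterization \eqref{E-aljxjalkxk}, which is more compact than the description $x = (I \oplus A_{lk}^{-} A) u$ of Proposition~\ref{L-maxxqxAxp}.

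First, I would fix any positive vector $x$, let $k$ attain the maximum in $q^{-} x = \bigoplus_{j} q_{j}^{-1} x_{j}$, so that $q^{-} x = q_{k}^{-1} x_{k}$, and let $l$ attain the maximum in $(Ax)^{-} p = \bigoplus_{i} p_{i}(Ax)_{i}^{-1}$, so that $(Ax)^{-} p = p_{l} (Ax)_{l}^{-1}$. Since $(Ax)_{l} = \bigoplus_{j} a_{lj} x_{j} \geq a_{lk} x_{k}$, the objective satisfies
\begin{equation*}
q^{-} x (Ax)^{-} p
=
q_{k}^{-1} x_{k} p_{l} (Ax)_{l}^{-1}
\leq
q_{k}^{-1} a_{lk}^{-1} p_{l}
\leq
\bigoplus_{k',l'} q_{k'}^{-1} a_{l'k'}^{-1} p_{l'}
=
q^{-} A^{-} p
=
\Delta,
\end{equation*}
which establishes the upper bound.

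Next, I would isolate the conditions that make the chain tight. The rightmost inequality is an equality exactly when the indices $k, l$ realise the outer maximum, i.e., when \eqref{E-qkalkplqAp} holds, while the middle inequality is an equality exactly when $a_{lk} x_{k} = \bigoplus_{j} a_{lj} x_{j}$, which is precisely \eqref{E-aljxjalkxk}. Therefore, if $x$ is optimal, then the indices $k$ and $l$ chosen above automatically satisfy \eqref{E-qkalkplqAp}, and the vector $x$ satisfies \eqref{E-aljxjalkxk} for these indices.

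For the converse, I would take any positive $x$ and any pair $(k, l)$ with \eqref{E-qkalkplqAp} and \eqref{E-aljxjalkxk}, and bound the objective from below. The trivial estimates $q^{-} x \geq q_{k}^{-1} x_{k}$ and $(Ax)^{-} p \geq p_{l} (Ax)_{l}^{-1}$, combined with $(Ax)_{l} = a_{lk} x_{k}$ from \eqref{E-aljxjalkxk} and $q_{k}^{-1} a_{lk}^{-1} p_{l} = \Delta$ from \eqref{E-qkalkplqAp}, give $q^{-} x (Ax)^{-} p \geq \Delta$. Combined with the upper bound this yields equality, so $x$ is optimal. The preliminary reduction to positive $p$, already justified in the paragraph preceding the theorem, ensures that the terms $p_{l}(Ax)_{l}^{-1}$ are well defined. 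Beyond this, the only mildly delicate point is a notational one: in the necessity direction $k, l$ are determined by $x$ as argmaxes, while in the sufficiency direction they range freely over all pairs satisfying \eqref{E-qkalkplqAp}. There is no deeper obstacle, and the whole proof reduces to the single chain of inequalities above.
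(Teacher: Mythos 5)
Your proposal is correct and takes essentially the same route as the paper: the tightness analysis of the chain $q^{-}x(Ax)^{-}p=q_{k}^{-1}x_{k}(Ax)_{l}^{-1}p_{l}\leq q_{k}^{-1}a_{lk}^{-1}p_{l}\leq q^{-}A^{-}p$ is exactly the paper's necessity argument, and your converse is its attainability argument. The only deviation is minor: the paper first establishes the upper bound separately via the matrix inequality $xx^{-}\geq I$, hence $x(Ax)^{-}\leq A^{-}$ and $q^{-}x(Ax)^{-}p\leq\Delta$, whereas you obtain it as a byproduct of the same scalar argmax chain — a slight streamlining that invokes the entrywise positivity of $A$ in the same place (the factor $a_{lk}^{-1}$) where the paper needs it.
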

\begin{proof}
We take the obvious inequality $xx^{-}\geq I$, which is valid for all positive vectors $x$. Multiplying it from the left by $A$, we obtain the inequality $Axx^{-}\geq A$. Since both sides of this inequality are matrices with positive entries, we further have $(Axx^{-})^{-}\leq A^{-}$ by conjugate transposing. The latter inequality is the same as $x(Ax)^{-}\leq A^{-}$, which we multiply by $q^{-}$ on the left and by $p$ on the right to obtain $q^{-}x(Ax)^{-}p\leq q^{-}A^{-}p=\Delta$. Thus $q^{-}A^{-}p$ is an upper bound for $q^{-}x(Ax)^{-}p$.

Let us show that there exists $x$ such that $q^{-}x(Ax)^{-}p\geq\Delta=q^{-}A^{-}p$. We define indices $k$ and $l$, and take a vector $x$ according to the conditions
\begin{equation*}
q^{-}A^{-}p
=
\bigoplus_{i=1}^{n}\bigoplus_{j=1}^{m}q_{i}^{-1}a_{ji}^{-1}p_{j}
=
q_{k}^{-1}a_{lk}^{-1}p_{l},
\qquad
\bigoplus_{j=1}^{n}a_{lj}x_{j}
=
a_{lk}x_{k}.
\end{equation*}

With this vector $x$, we obtain
\begin{equation*} 
q^{-}x(Ax)^{-}p
\geq
q_{k}^{-1}x_{k}(a_{lk}x_{k})^{-1}p_{l}
=
q_{k}^{-1}a_{lk}^{-1}p_{l}
=
q^{-}A^{-}p.
\end{equation*}

With the opposite inequality, we have $q^{-}x(Ax)^{-}p=q^{-}A^{-}p$, which means that $q^{-}A^{-}p$ is a strict upper bound, and thus the maximum in problem \eqref{P-maxxqxAxp}.

Let us now take an arbitrary solution $x$ of \eqref{P-maxxqxAxp}, and then verify that $x$ satisfies \eqref{E-aljxjalkxk} under condition \eqref{E-qkalkplqAp}. First note, that, for such $x$, we have $q^{-}x(Ax)^{-}p=q^{-}A^{-}p$. Let indices $s$ and $t$ be defined by the conditions
\begin{equation*}
q^{-}x=q_{s}^{-1}x_s,
\qquad
(Ax)^{-}p=(Ax)^{-1}_{t}p_{t}.
\end{equation*}

Then, we can write the following chain of equalities and inequalities:
\begin{equation*}
q^{-}x(Ax)^{-}p
=
q_{s}^{-1}x_{s}(Ax)^{-1}_{t}p_{t}
\leq
q_{s}^{-1}x_{s}a_{ts}^{-1}x_{s}^{-1}p_{t}
=
q_{s}^{-1}a_{ts}^{-1}p_{t}
\leq
q^{-}A^{-}p.
\end{equation*}

However, since $q^{-}A^{-}p=q^{-}x(Ax)^{-}p$, both inequalities in this chain turn into equalities. As a result, we have
\begin{equation*}
q_{s}^{-1}a_{ts}^{-1}p_{t}
=
q^{-}A^{-}p,
\qquad
(Ax)_{t}
=
a_{ts}x_{s},
\end{equation*}
which means that \eqref{E-aljxjalkxk} and \eqref{E-qkalkplqAp} are satisfied with $k=s$ and $l=t$ .
\end{proof}

The entrywise positivity of the matrix $A$ is important for the above proof, particularly in inverting the inequality $Axx^{-}\geq A$. Indeed, if we take $A=I$ then $Axx^{-}\geq A$ does not imply $(Axx^{-})^{-}\leq A^{-}$, where $A^{-}=A=I$. If we define $A^{-}$ as a matrix with $+\infty$ entries then $q^{-}A^{-}p$ may become $+\infty$: a trivial bound, which is never attained.

We now examine a special case of problem \eqref{P-maxxqxAxp} that arises in the tropical implementation of AHP and as the problem of Hilbert seminorm maximization. We set $q^{-}=1^{T}A$ and $p=1$ in \eqref{P-maxxqxAxp}, and consider the problem
\begin{equation}
\begin{aligned}
&
\max_{x}
&&
1^{T}Ax(Ax)^{-}1.
\end{aligned}
\label{P-maxx1AxAx1}
\end{equation}

A complete solution to this problem is formulated as follows.
\begin{corollary}
\label{C-maxx1AxAx1}
Let $A$ be a positive matrix, and $\Delta=1^{T}AA^{-}1$. Let $A_{lk}$ be the matrix obtained from $A$ by keeping the entry $a_{lk}$ for some indices $l$ and $k$, and replacing the other entries by zero.

Then, the optimal value in problem \eqref{P-maxx1AxAx1} is equal to $\Delta$, and all positive solutions are given by
\begin{equation*}
x
=
(I\oplus A_{lk}^{-}A)u,
\qquad
u>0,
\end{equation*}
for all indices $k$ and $l$ defined by the condition
\begin{equation*}
1^{T}a_{k}a_{lk}^{-1}
=
\Delta.
\end{equation*}
\end{corollary}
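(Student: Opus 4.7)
The plan is to obtain the corollary as an immediate specialization of Proposition~\ref{L-maxxqxAxp}, by choosing $q^{-}=1^{T}A$ and $p=1$ in problem~\eqref{P-maxxqxAxp}. Positivity of $A$ guarantees that $q=(1^{T}A)^{-}$ is a positive vector, and $p=1$ is nonzero, so all hypotheses of Proposition~\ref{L-maxxqxAxp} are satisfied.

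First I would verify that the substitution reproduces both the objective and the value of $\Delta$. The objective $q^{-}x(Ax)^{-}p$ becomes $1^{T}Ax(Ax)^{-}1$, matching \eqref{P-maxx1AxAx1}, and $\Delta=q^{-}A^{-}p=(1^{T}A)A^{-}1=1^{T}AA^{-}1$, in agreement with the statement of the corollary.

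Next I would translate the index conditions of Proposition~\ref{L-maxxqxAxp} into the compact scalar identity used in the corollary. With $p=1$, the rule $l=\arg\max_{i}a_{ik}^{-1}p_{i}$ selects $l$ such that $a_{lk}^{-1}=\bigoplus_{i}a_{ik}^{-1}$, that is, $a_{lk}$ is minimal in column $k$; simultaneously $q_{j}^{-1}=(1^{T}A)_{j}=\bigoplus_{i}a_{ij}=1^{T}a_{j}$. Substituting these into the optimality identity $q_{k}^{-1}a_{lk}^{-1}p_{l}=\Delta$ from Proposition~\ref{L-maxxqxAxp} yields
\[
1^{T}a_{k}\,a_{lk}^{-1}=\Delta,
\]
which is precisely the condition stated in the corollary; conversely, any pair $(k,l)$ meeting this identity satisfies both $\arg\max$ rules of the proposition. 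The explicit solution form $x=(I\oplus A_{lk}^{-}A)u$ with $u>0$ then transfers verbatim.

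The only point calling for care is the collapse of the two separate $\arg\max$ conditions on $k$ and on $l$ into the single scalar equation $1^{T}a_{k}a_{lk}^{-1}=\Delta$, and checking that this produces exactly the same set of admissible index pairs. This is expected to be a routine verification and the sole mild obstacle in what is otherwise a direct specialization argument.
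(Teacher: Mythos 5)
Your argument is correct, but it takes a slightly different route from the paper. You specialize Proposition~\ref{L-maxxqxAxp} (the result imported from earlier work, which already states the solution in the parametric form $x=(I\oplus A_{lk}^{-}A)u$ with the indices given by two nested $\arg\max$ rules), so your only remaining task is to show that those two rules select exactly the pairs $(k,l)$ with $1^{T}a_{k}a_{lk}^{-1}=\Delta$. That check is indeed routine: if $(k,l)$ comes from the $\arg\max$ rules then $1^{T}a_{k}a_{lk}^{-1}=\max_{j}(1^{T}a_{j}\max_{i}a_{ij}^{-1})=\Delta$; conversely, if $1^{T}a_{k}a_{lk}^{-1}=\Delta$, then $\Delta\leq 1^{T}a_{k}\max_{i}a_{ik}^{-1}\leq\Delta$ forces $k$ to attain the outer maximum, and positivity of $1^{T}a_{k}$ then forces $a_{lk}^{-1}=\max_{i}a_{ik}^{-1}$, so $l$ attains the inner one. (Note, incidentally, that the identity $q_{k}^{-1}a_{lk}^{-1}p_{l}=\Delta$ you invoke is stated in Theorem~\ref{T-maxxqxAxp}, not in the Proposition, so this equivalence really is the step you must supply.) The paper instead specializes its own Theorem~\ref{T-maxxqxAxp}, whose solution set is described by the equality $\bigoplus_{j}a_{lj}x_{j}=a_{lk}x_{k}$ under the scalar condition, and the work there lies in converting that equality into the parametric form $x=(I\oplus A_{lk}^{-}A)u$ via the auxiliary vector $u$. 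Your route buys brevity and inherits the parametrization verbatim, but it relies on the externally cited Proposition and on the argmax-to-scalar translation; the paper's route is self-contained with respect to its new theorem and makes the derivation of the parametric representation explicit, which is in keeping with its stated aim of refining and simplifying these solutions.
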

\begin{proof}
We apply Theorem~\ref{T-maxxqxAxp} with $q^{-}=1^{T}A$ and $p=1$ to represent the maximum in the problem as $q^{-}A^{-}p=1^{T}AA^{-}1$, and the left-hand side of the condition at \eqref{E-qkalkplqAp} as $q_{k}^{-1}a_{lk}^{-1}p_{l}=1^{T}a_{k}a_{lk}^{-1}$. 

Furthermore, we note that the equality \eqref{E-aljxjalkxk} does not include the vectors $p$ and $q$, and thus remains unchanged. To represent the set of solutions in a compact vector form, we multiply both sides of the equality by $a_{lk}^{-1}$. Furthermore, we introduce a positive $n$-vector of parameters $u=(u_{j})$ and rewrite this equality in a parametric form using the scalar equalities
\begin{equation*}
x_{k}
=
\bigoplus_{j=1}^{n}a_{lk}^{-1}a_{lj}u_{j},
\qquad
x_{i}
=
u_{i},
\qquad
i\ne k.
\end{equation*}

We denote by $A_{lk}$ the matrix obtained from $A$ by setting all entries other than $a_{lk}$ to zero. With this matrix, we represent the scalar equalities in the vector form
\begin{equation*}
x
=
(I\oplus A_{lk}^{-}A)u,
\qquad
u>0,
\end{equation*}
which completes the proof.
\end{proof}

\subsection{Minimization problem}
\label{ss:minimization}

We now consider a constrained minimization problem that we use in the tropical implementation of AHP, and solve it by reducing to problem \eqref{P-minxqxxp-Axx}. Suppose that, given a matrix $A$ with spectral radius $\lambda>0$, the problem is to find positive vectors $x$ that yield the minimum
\begin{equation}
\begin{aligned}
&
\min_{x}
&&
1^{T}xx^{-}1,
\\
&&&
x
=
(\lambda^{-1}A)^{\ast}u,
\quad
u>0.
\end{aligned}
\label{P-minx1xx1-xlambdaAastu}
\end{equation}

The next result offers a complete solution to the problem.
\begin{corollary}
\label{C-minx1xx1-xlambdaAastu}
Let $A$ be a matrix with spectral radius $\lambda>0$, and $\delta=1^{T}(\lambda^{-1}A)^{\ast}1$. 

Then, the optimal value in problem \eqref{P-minx1xx1-xlambdaAastu} is equal to $\delta$, and all positive solutions are given by
\begin{equation*}
x
=
(\delta^{-1}11^{T}\oplus\lambda^{-1}A)^{\ast}u,
\qquad
u>0.
\end{equation*}
\end{corollary}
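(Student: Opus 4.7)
The plan is to reduce the problem directly to Proposition~\ref{L-minxqxxp-Axx} by recognising that its constraint is just a parametric rewriting of the inequality constraint in \eqref{P-minxqxxp-Axx}, with the roles of the data chosen appropriately.

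First I would observe that, since $\lambda$ is the tropical spectral radius of $A$, we have $\mathop{\mathrm{Tr}}(\lambda^{-1}A)\le 1$. Therefore Lemma~\ref{L-Axx}, applied to the matrix $\lambda^{-1}A$, states that the set of positive vectors $x$ satisfying $\lambda^{-1}Ax\le x$ is exactly the set of vectors of the form $x=(\lambda^{-1}A)^{\ast}u$ with $u>0$. Consequently, the feasible set in \eqref{P-minx1xx1-xlambdaAastu} coincides with the feasible set of positive solutions to the inequality $\lambda^{-1}Ax\le x$, and problem \eqref{P-minx1xx1-xlambdaAastu} can be rewritten as an instance of \eqref{P-minxqxxp-Axx} in which the matrix is $\lambda^{-1}A$ and the vectors $p,q$ are both chosen equal to $1$.

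Next I would verify that the hypotheses of Proposition~\ref{L-minxqxxp-Axx} are met in this specialisation: the vector $p=1$ is nonzero, the vector $q=1$ is positive, and $\mathop{\mathrm{Tr}}(\lambda^{-1}A)\le 1$ as noted. Applying the proposition with these substitutions gives at once that the optimal value equals $q^{-}(\lambda^{-1}A)^{\ast}p=1^{T}(\lambda^{-1}A)^{\ast}1=\delta$, and that the complete set of positive optimisers is
\[
x=(\delta^{-1}pq^{-}\oplus\lambda^{-1}A)^{\ast}u=(\delta^{-1}11^{T}\oplus\lambda^{-1}A)^{\ast}u,\qquad u>0,
\]
which is the desired conclusion.

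The only genuinely substantive step is the equivalence between the parametric form $x=(\lambda^{-1}A)^{\ast}u$ and the inequality $\lambda^{-1}Ax\le x$; once this identification is in place, the rest is a mechanical specialisation. Since this equivalence is precisely the content of Lemma~\ref{L-Axx}, there is no real obstacle, and the proof is essentially a one-line reduction followed by an invocation of Proposition~\ref{L-minxqxxp-Axx}.
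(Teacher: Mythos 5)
Your proposal is correct and follows essentially the same route as the paper's own proof: use Lemma~\ref{L-Axx} (together with the fact that $\mathop{\mathrm{Tr}}(\lambda^{-1}A)\leq 1$) to identify the parametric constraint $x=(\lambda^{-1}A)^{\ast}u$ with the inequality $\lambda^{-1}Ax\leq x$, and then apply Proposition~\ref{L-minxqxxp-Axx} with $A$ replaced by $\lambda^{-1}A$ and $p=q=1$. No gaps; nothing further is needed.
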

\begin{proof} 
We consider the equality $x=(\lambda^{-1}A)^{\ast}u$, and note that by Lemma~\ref{L-Axx}, this equality means that $x$ is determined by the inequality $\lambda^{-1}Ax\leq x$.

Observing that $\mathop\mathrm{Tr}(\lambda^{-1}A)\leq1$, we apply Proposition~\ref{L-minxqxxp-Axx}, where $A$ is replaced by $\lambda^{-1}A$ and both $q$ and $p$ by $1$, and thus complete the proof.
\end{proof}

\subsection{Geometric interpretation}
\label{ss:geometry}

Problems \eqref{P-maxx1AxAx1} and \eqref{P-minx1xx1-xlambdaAastu} consist in maximizing and minimizing
\begin{equation}
\label{e:Hilb-ratio}
1^{T}xx^{-}1
=
(\max_{i=1}^{n}x_{i})(\max_{j=1}^{n}(1/x_{j})),
\end{equation}
where $x$ belongs to the set $\{Au\colon u\in R_{+}^{n}\}$, $A$ is an $n\times n$ nonnegative matrix (and, more specifically, a Kleene star). That set will be referred to as the tropical column span of $A$ and denoted by $\operatorname{span}(A)$.

The logarithm of ratio \eqref{e:Hilb-ratio} is known as the Hilbert seminorm or range seminorm \cite{Butkovic2010Maxlinear}, or as the Hilbert semidistance between $x$ and $1$ \cite{Cohen2004Duality}. Therefore, problems \eqref{P-maxx1AxAx1} and \eqref{P-minx1xx1-xlambdaAastu} consist in finding the maximum and minimum of the Hilbert seminorm of vectors in the tropical column span, $\operatorname{span}(A)$, or, in other words, finding the maximum and minimum of the Hilbert semidistance between $x$ and $1$. We now give two three-dimensional examples, which illustrate the geometry of the optimization problems under consideration. 

\begin{example}
\label{E-1AuAu1-left}
We start with the following matrix:
\begin{equation*}
A
=
A^{\ast}
=
\begin{pmatrix}
1 & 3/4 & 1/2
\\
4/3 & 1 & 2/3
\\
2/3 & 1/2 & 1
\end{pmatrix}.
\end{equation*}

The problem of minimizing the Hilbert seminorm over $\operatorname{span}(A)$ is posed as follows: 
\begin{equation*}
\begin{aligned}
&
\min_{x}
&&
1^{T}xx^{-}1,
\\
&&&
x=Au,
\quad
u>0,
\end{aligned}
\end{equation*}
and it is the same as \eqref{P-minx1xx1-xlambdaAastu}.

We solve this problem by applying Corollary~\ref{C-minx1xx1-xlambdaAastu}. We observe that the matrix $A$ has the spectral radius $\lambda=1$, and hence $\lambda^{-1}A=A$. The optimal value of this problem is equal to
\begin{equation*}
\delta
=
1^{T}A1
=
\max_{i,j=1}^{n}a_{ij}
=
4/3.
\end{equation*}

To find the solution set, we successively compute 
\begin{equation*}
\delta^{-1}11^{T}\oplus A
= 
\begin{pmatrix}
1 & 3/4 & 3/4
\\
4/3 & 1 & 3/4
\\
3/4 & 3/4 & 1
\end{pmatrix},
\quad
(\delta^{-1}11^{T}\oplus A)^{\ast}
=
\begin{pmatrix}
1 & 3/4 & 3/4
\\
4/3 & 1 & 1
\\
1 & 3/4 & 1
\end{pmatrix}.
\end{equation*}

As the first two columns of the last matrix are proportional to one another, the solution set can be written as 
\begin{equation*}
x
=
\begin{pmatrix}
1 & 3/4
\\
4/3 & 1
\\
1 & 1
\end{pmatrix}u,
\quad
u>0.
\end{equation*}

The section of this solution set by the plane $\{x\colon x_{3}=1\}$ is the segment between $(1, 4/3)$ and $(3/4, 1)$: see the thick blue segment on Figure~\ref{F-1AuAu1}.
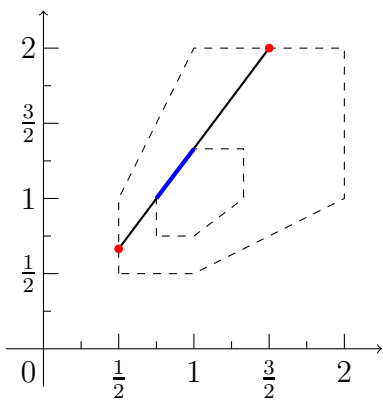
\begin{figure}[ht]
\begin{tikzpicture}
\draw[dashed] (1.5,1.5) -- (1.5,2.5) --  (2.5,4.5) -- (4.5,4.5) -- (4.5,2.5) -- (2.5,1.5) -- (1.5,1.5);
\draw[dashed] (2,2) -- (2,2.5) --  (2.5,3.16) -- (3.16,3.16) -- (3.16,2.5) -- (2.5,2) -- (2,2);
\draw[thick] (1.5,1.83) -- (3.5,4.5);
\draw[ultra thick, blue] (2,2.5) -- (2.5,3.16);
\draw[fill,red] (1.5,1.83) circle [radius=0.05];
\draw[fill,red] (3.5,4.5) circle [radius=0.05];
\draw[->] (0,0.5) -- (5, 0.5);
\draw[->] (0.5,0) -- (0.5,5);

\node at (0.3,0.2) {$0$};

\draw (1,0.5)--(1,0.6);

\draw (1.5,0.5)-- (1.5,0.7);
\node at (1.5,0.1) {$\frac{1}{2}$};

\draw (2,0.5) -- (2,0.6);

\draw (2.5,0.5) -- (2.5,0.7);
\node at (2.5,0.2) {$1$};

\draw (3,0.5)-- (3,0.6);

\draw (3.5,0.5) -- (3.5,0.7);
\node at (3.5,0.1) {$\frac{3}{2}$};

\draw (4,0.5) -- (4,0.6);

\draw (4.5,0.5) -- (4.5,0.7);
\node at (4.5,0.2) {$2$};

\draw (0.5,1) -- (0.6,1);

\draw (0.5,1.5) -- (0.7,1.5);
\node at (0.3,1.5) {$\frac{1}{2}$};

\draw (0.5,2) -- (0.6,2);


\draw (0.5,2.5) -- (0.7,2.5);
\node at (0.3,2.5) {$1$};

\draw (0.5,3) -- (0.6,3);

\draw (0.5,3.5) -- (0.7,3.5);
\node at (0.3,3.5) {$\frac{3}{2}$};

\draw (0.5,4) -- (0.6,4);

\draw (0.5,4.5) -- (0.7,4.5);
\node at (0.3,4.5) {$2$}; 
\end{tikzpicture}
\if{
&
\begin{tikzpicture}
\draw[dashed] (1.5,1.5) -- (1.5,2.5) --  (2.5,4.5) -- (4.5,4.5) -- (4.5,2.5) -- (2.5,1.5) -- (1.5,1.5);
\draw[thick] (1.5,1.83) -- (3.5,4.5);
\draw[fill,red] (1.5,1.83) circle [radius=0.05];
\draw[fill,red] (3.5,4.5) circle [radius=0.05];

\draw[->] (0,0.5) -- (5, 0.5);
\draw[->] (0.5,0) -- (0.5,5);

\node at (0.3,0.2) {$0$};

\draw (1,0.5)--(1,0.6);

\draw (1.5,0.5)-- (1.5,0.7);
\node at (1.5,0.1) {$\frac{1}{2}$};

\draw (2,0.5) -- (2,0.6);

\draw (2.5,0.5) -- (2.5,0.7);
\node at (2.5,0.2) {$1$};

\draw (3,0.5)-- (3,0.6);

\draw (3.5,0.5) -- (3.5,0.7);
\node at (3.5,0.1) {$\frac{3}{2}$};

\draw (4,0.5) -- (4,0.6);

\draw (4.5,0.5) -- (4.5,0.7);
\node at (4.5,0.2) {$2$};

\draw (0.5,1) -- (0.6,1);

\draw (0.5,1.5) -- (0.7,1.5);
\node at (0.3,1.5) {$\frac{1}{2}$};

\draw (0.5,2) -- (0.6,2);


\draw (0.5,2.5) -- (0.7,2.5);
\node at (0.3,2.5) {$1$};

\draw (0.5,3) -- (0.6,3);

\draw (0.5,3.5) -- (0.7,3.5);
\node at (0.3,3.5) {$\frac{3}{2}$};

\draw (0.5,4) -- (0.6,4);

\draw (0.5,4.5) -- (0.7,4.5);
\node at (0.3,4.5) {$2$}; 

\end{tikzpicture}

\end{tabular}
}\fi
\caption{Section of the tropical column span of $A$ (black segment) and solution sets to the minimization problem (thick blue segment) and the maximization problem (two ends of black segment, in red) of Example~\ref{E-1AuAu1-left}. Dashed lines are sections of two Hilbert spheres, of radii $\log 4/3$ and $\log 2$.}
\label{F-1AuAu1}
\end{figure}

Let us now consider the problem of maximizing the Hilbert seminorm \eqref{P-maxx1AxAx1}. To find the optimal value by Corollary~\ref{C-maxx1AxAx1}, we calculate
\begin{equation*}
A^{-}
=
\begin{pmatrix}
1 & 3/4 & 3/2
\\
4/3 & 1 & 2
\\
2 & 3/2 & 1
\end{pmatrix},
\qquad
\Delta
=
1^{T}AA^{-}1
=
2. 
\end{equation*}

Furthermore, we calculate 
\begin{equation*}
1^{T}a_{1}
=
4/3,
\qquad
1^{T}a_{2}
=
1^{T}a_{3}
=
1,
\end{equation*}
and then observe that the condition $1^{T}a_{k}a_{lk}^{-1}=\Delta$ is satisfied if either $k=2$ and $l=3$, or $k=3$ and $l=1$.

Let us take $k=2$ and $l=3$. Then, we calculate
\begin{equation*}
A_{32}^{-}
=
\begin{pmatrix}
0 & 0 & 0
\\
0 & 0 & 2
\\
0 & 0 & 0
\end{pmatrix},
\quad
(I\oplus A_{32}^{-}A)
=
\begin{pmatrix}
1 & 0 & 0
\\
4/3 & 1 & 2
\\
0 & 0 & 1
\end{pmatrix}
\end{equation*}
and consider the following solution of \eqref{P-maxx1AxAx1}:
\begin{equation*}
x
=
\begin{pmatrix}
1 & 0 & 0
\\
4/3 & 1 & 2
\\
0 & 0 & 1
\end{pmatrix}u,
\quad
u>0.
\end{equation*}

The corresponding vector in $\operatorname{span}(A)$ is
\begin{equation*}
Ax
=
\begin{pmatrix}
1 & 3/4 & 3/2
\\
4/3 & 1 & 2
\\
2/3 & 1/2 & 1
\end{pmatrix}u.
\end{equation*}

Since all columns of this matrix are proportional to the third column, we take this column to represent the solution. The section of the solution set by the plane $x_{3}=1$ is just one point $(3/2, 2)$.

It is not difficult to verify in the similar way that with $k=3$ and $l=1$, we have the solution
\begin{equation*}
Ax
=
\begin{pmatrix}
1/2
\\
2/3
\\
1
\end{pmatrix}v,
\quad
v>0.
\end{equation*}
which intersects the plane $x_{3}=1$ in the point $(1/2, 2/3)$.

Both solutions are shown on Figure~\ref{F-1AuAu1} (thick red dots).
\end{example}

\begin{example}
\label{E-1AuAu1-right}
Consider problems \eqref{P-maxx1AxAx1} and \eqref{P-minx1xx1-xlambdaAastu} with the matrix
\begin{equation*}
A
=
A^{\ast}
=
\begin{pmatrix}
1 & 3/4 & 1/2
\\
3/4 & 1 & 1/2
\\
1/2 & 1/2 & 1
\end{pmatrix}.
\end{equation*}

To solve problem \eqref{P-minx1xx1-xlambdaAastu} of minimizing the Hilbert seminorm, we note that $\lambda=1$. Next, we find 
\begin{equation*}
\delta
=
1^{T}A1
=
1,
\end{equation*}
which is attained on the ray of points whose all coordinates are equal to each other. The section of this ray by the plane $x_{3}=1$ coincides with the point $(1,1)$ (the thick blue dot on Figure~\ref{F-1AuAu2}). 
\begin{figure}[ht]
\begin{tikzpicture}
\draw[dashed] (1.5,1.5) -- (1.5,2.5) --  (2.5,4.5) -- (4.5,4.5) -- (4.5,2.5) -- (2.5,1.5) -- (1.5,1.5);


\path[fill=yellow] (1.5,1.5) -- (1.5,1.83) -- (3.5,4.5) -- (4.5,4.5)-- (4.5,3.5) -- (1.83,1.5) -- (1.5,1.5);
\draw (1.5,1.5) -- (1.5,1.83) -- (3.5,4.5) -- (4.5,4.5)-- (4.5,3.5) -- (1.83,1.5) -- (1.5,1.5);  

\draw[fill,blue] (2.5,2.5) circle [radius=0.05];

\draw[ultra thick, red] (1.83,1.5) -- (1.5,1.5) -- (1.5,1.83);
\draw[ultra thick, red] (3.5,4.5) -- (4.5,4.5) -- (4.5,3.5);

\draw[->] (0,0.5) -- (5, 0.5);
\draw[->] (0.5,0) -- (0.5,5);

\node at (0.3,0.2) {$0$};

\draw (1,0.5)--(1,0.6);

\draw (1.5,0.5)-- (1.5,0.7);
\node at (1.5,0.1) {$\frac{1}{2}$};

\draw (2,0.5) -- (2,0.6);

\draw (2.5,0.5) -- (2.5,0.7);
\node at (2.5,0.2) {$1$};

\draw (3,0.5)-- (3,0.6);

\draw (3.5,0.5) -- (3.5,0.7);
\node at (3.5,0.1) {$\frac{3}{2}$};

\draw (4,0.5) -- (4,0.6);

\draw (4.5,0.5) -- (4.5,0.7);
\node at (4.5,0.2) {$2$};

\draw (0.5,1) -- (0.6,1);

\draw (0.5,1.5) -- (0.7,1.5);
\node at (0.3,1.5) {$\frac{1}{2}$};

\draw (0.5,2) -- (0.6,2);


\draw (0.5,2.5) -- (0.7,2.5);
\node at (0.3,2.5) {$1$};

\draw (0.5,3) -- (0.6,3);

\draw (0.5,3.5) -- (0.7,3.5);
\node at (0.3,3.5) {$\frac{3}{2}$};

\draw (0.5,4) -- (0.6,4);

\draw (0.5,4.5) -- (0.7,4.5);
\node at (0.3,4.5) {$2$}; 
\end{tikzpicture}
\if{
&
\begin{tikzpicture}
\draw[dashed] (1.5,1.5) -- (1.5,2.5) --  (2.5,4.5) -- (4.5,4.5) -- (4.5,2.5) -- (2.5,1.5) -- (1.5,1.5);

\path[fill=yellow] (1.5,1.5) -- (1.5,1.83) -- (3.5,4.5) -- (4.5,4.5)-- (4.5,3.5) -- (1.83,1.5) -- (1.5,1.5);
\draw[thick](1.5,1.5) -- (1.5,1.83) -- (3.5,4.5) -- (4.5,4.5)-- (4.5,3.5) -- (1.83,1.5) -- (1.5,1.5);  
\draw[ultra thick, red] (1.83,1.5) -- (1.5,1.5) -- (1.5,1.83);
\draw[ultra thick, red] (3.5,4.5) -- (4.5,4.5) -- (4.5,3.5);

\draw[->] (0,0.5) -- (5, 0.5);
\draw[->] (0.5,0) -- (0.5,5);

\node at (0.3,0.2) {$0$};

\draw (1,0.5)--(1,0.6);

\draw (1.5,0.5)-- (1.5,0.7);
\node at (1.5,0.1) {$\frac{1}{2}$};

\draw (2,0.5) -- (2,0.6);

\draw (2.5,0.5) -- (2.5,0.7);
\node at (2.5,0.2) {$1$};

\draw (3,0.5)-- (3,0.6);

\draw (3.5,0.5) -- (3.5,0.7);
\node at (3.5,0.1) {$\frac{3}{2}$};

\draw (4,0.5) -- (4,0.6);

\draw (4.5,0.5) -- (4.5,0.7);
\node at (4.5,0.2) {$2$};

\draw (0.5,1) -- (0.6,1);

\draw (0.5,1.5) -- (0.7,1.5);
\node at (0.3,1.5) {$\frac{1}{2}$};

\draw (0.5,2) -- (0.6,2);


\draw (0.5,2.5) -- (0.7,2.5);
\node at (0.3,2.5) {$1$};

\draw (0.5,3) -- (0.6,3);

\draw (0.5,3.5) -- (0.7,3.5);
\node at (0.3,3.5) {$\frac{3}{2}$};

\draw (0.5,4) -- (0.6,4);

\draw (0.5,4.5) -- (0.7,4.5);
\node at (0.3,4.5) {$2$}; 

\end{tikzpicture}
\end{tabular}
}\fi
\caption{Sections of the tropical column span of $A$ (yellow) and solutions of the minimization problem (blue point $(1,1)$) and the maximization problem (four thick red segments) of Example~\ref{E-1AuAu1-right}. }
\label{F-1AuAu2}
\end{figure}
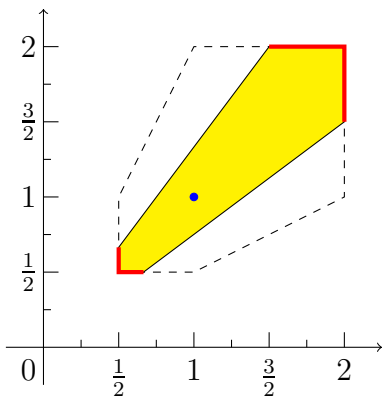

Let us examine problem \eqref{P-maxx1AxAx1} of maximizing the Hilbert seminorm. First, we calculate 
\begin{equation*}
A^{-}
=
\begin{pmatrix}
1 & 4/3 & 2
\\
4/3 & 1 & 2
\\
2 & 2 & 1
\end{pmatrix},
\qquad
\delta
=
1^{T}AA^{-}1
=
2.
\end{equation*}

Let us observe here that since all entries of $A$ are not greater than one, the entrywise logarithm of that matrix is a normal matrix and hence the results of \cite{Delapuente2013Ontropical} apply to it. According to \cite{Delapuente2013Ontropical}, $\delta$ is the greatest entry of $A^{-}$, which is the same as $1^{T}A^{-}1$. However, this is also clear from our computation since $1^{T}A=1^{T}$ and hence $1^{T}AA^{-}1=1^{T}A^{-}1$ in this case.

Next, we have
\begin{equation*}
1^{T}a_{1}
=
1^{T}a_{2}
=
1^{T}a_{3}
=
1,
\end{equation*}
and then conclude that the condition $1^{T}a_{k}a_{lk}^{-1}=\Delta$ is satisfied at four $(k,l)$ pairs: $(3,1)$, $(3,2)$, $(1,3)$ and $(2,3)$.

For $k=3$ and $l=1$, we have to calculate
\begin{equation*}
A_{13}^{-}
=
\begin{pmatrix}
0 & 0 & 0
\\
0 & 0 & 0
\\
2 & 0 & 0
\end{pmatrix},
\quad
(I\oplus A_{13}^{-}A)
=
\begin{pmatrix}
1 & 0 & 0
\\
0 & 1 & 0
\\
2 & 3/2 & 1
\end{pmatrix}.
\end{equation*}
and then we consider the following solutions of \eqref{P-maxx1AxAx1}:
\begin{equation*}
x
=
\begin{pmatrix}
1 & 0 & 0
\\
0 & 1 & 0
\\
2 & 3/2 & 1
\end{pmatrix}u,
\quad
u>0.
\end{equation*}

The corresponding vector in $\operatorname{span}(A)$ is calculated as
\begin{equation*}
Ax
=
\begin{pmatrix}
1 & 3/4 & 1/2
\\
1 & 1 & 1/2
\\
2 & 3/2 & 1
\end{pmatrix}u.
\end{equation*}

The first and third column of the last matrix are proportional to each other, hence solution subset corresponding to $(k,l)=(3,1)$ is 
\begin{equation*}
\left\{
\begin{pmatrix}
1/2 & 1/2
\\
1/2 & 2/3
\\
1 & 1
\end{pmatrix}v
\colon
v>0\right\}.
\end{equation*}

The intersection of this subset with $\{x\colon x_3=1\}$ is the segment with ends $(1/2, 1/2)$ and $(1/2, 2/3)$.

Assume that $(k,l)=(3,2)$. In this case, we have
\begin{equation*}
A_{23}^{-}
=
\begin{pmatrix}
0 & 0 & 0
\\
0 & 0 & 0
\\
0 & 2 & 0
\end{pmatrix},
\quad
(I\oplus A_{23}^{-}A)
=
\begin{pmatrix}
1 & 0 & 0
\\
0 & 1 & 0
\\
3/2 & 2 & 1
\end{pmatrix}.
\end{equation*}

The solution of problem \eqref{P-maxx1AxAx1} is written in the form
\begin{equation*}
x
=
\begin{pmatrix}
1 & 0 & 0
\\
0 & 1 & 0
\\
2 & 3/2 & 1
\end{pmatrix}u,
\quad
u>0.
\end{equation*}

The solution yields the vector
\begin{equation*}
Ax
=
\begin{pmatrix}
1 & 1 & 1/2
\\
3/4 & 1 & 1/2
\\
3/2 & 2 & 1
\end{pmatrix}u.
\end{equation*}

The second and third column of the matrix are proportional to each other, hence solution subset corresponding to $(k,l)=(3,2)$ is 
\begin{equation*}
\left\{
\begin{pmatrix}
1/2 & 2/3
\\
1/2 & 1/2
\\
1 & 1
\end{pmatrix}v
\colon
v>0
\right\}.
\end{equation*}

The intersection of this subset with the plane $\{x\colon x_{3}=1\}$ is the segment with the ends $(1/2, 1/2)$ and $(2/3, 1/2)$.

We can similarly find solutions for $(k,l)=(1,3)$ and $(k,l)=(2,3)$. Solutions for all four pairs $(k,l)$ are shown on Figure~\ref{F-1AuAu2} (thick red lines).
\end{example}

\section{Tropical implementation of AHP}
\label{s:AHP}

We are now in a position to describe our tropical implementation of AHP. Let us consider a multi-criteria decision problem to rate $n$ alternatives (choices) from pairwise comparisons with respect to $m$ criteria. Suppose that $C$ is an $(m\times m)$-matrix of pairwise comparisons of the criteria, and $A_{1},\ldots,A_{m}$ are $(n\times n)$-matrices of pairwise comparisons of the alternatives for every criterion. Given the above matrices, the problem consists in finding $n$-vectors $x$ of scores (rates, priorities) of the alternatives.   

We propose a decision procedure that involves the following steps: (i) log-Chebyshev approximation of the pairwise comparison matrix of criteria to find the vector of weights in a parametric form; (ii) simultaneous weighted minimax approximation of the pairwise comparison matrices of choices according to each criterion to obtain the vectors of priorities for choices; (iii) solution of the optimization problems of maximizing and minimizing the Hilbert seminorm to derive the vectors, which most and least differentiate between the choices with the highest and lowest priorities. 

At the first step, we need to evaluate the relative importance of the criteria by solving problem \eqref{P-minxaijxjxi} with the matrix $C$. In terms of the tropical linear algebra, problem \eqref{P-minxaijxjxi} takes the form of \eqref{P-minxxAx}. Therefore, we can apply Lemma~\ref{L-minxxAx} to obtain the vector of weights in the parametric form
\begin{equation*}
w
=
(\lambda^{-1}C)^{\ast}v,
\end{equation*}
where $\lambda$ is the tropical spectral radius of $C$, and $v$ is any positive vector.

The next step is the evaluation of the priorities of alternatives, which involves the solution of problem \eqref{P-minxaijxjxi} with the matrix $B$ defined by \eqref{E-bij-maxwkaijk}, where the vector $w$ is the weight vector obtained at the first step. After translation into the tropical linear algebra language, we have problem \eqref{P-minxwkxAkx}. Corollary~\ref{C-minxwkxAkx} offers the solution to the problem in the form
\begin{equation}
x
=
(\mu^{-1}B)^{\ast}u,
\qquad
B
=
\bigoplus_{k=1}^{m}w_{k}A_{k},
\label{e:xB}
\end{equation}
where
$\mu$ is the spectral radius of the matrix $B$, and $u$ is any positive vector.

If the obtained solution $x=Su$, where $S=(\mu^{-1}B)^{\ast}$ or $S$ is a submatrix of linearly independent columns of $(\mu^{-1}B)^{\ast}$, is not unique (up to a positive factor), we need to solve problems \eqref{P-maxxmaxximax1xj} and \eqref{P-minxmaxximax1xj} to determine the most and least differentiating priority vectors.

In the tropical linear algebra setting, problem \eqref{P-maxxmaxximax1xj} reduces to \eqref{P-maxx1AxAx1} where $A$ is replaced by $S$, and $x$ by $u$. Application of Corollary~\ref{C-maxx1AxAx1} to solve the last problem requires calculating $\Delta=1^{T}SS^{-}1$, and yields the solution 
\begin{equation*}
u
=
S(I\oplus S_{lk}^{-}S)v,
\qquad
v>0,
\end{equation*}
where the indices $k$ and $l$ satisfy the condition
\begin{equation*}
1^{T}s_{k}s_{lk}^{-1}
=
\Delta.
\end{equation*}

The most differentiating priority vectors are then given by
\begin{equation*}
x_{1}
=
S(I\oplus S_{lk}^{-}S)v,
\qquad
v>0.
\end{equation*}

Problem \eqref{P-minxmaxximax1xj} takes the form of \eqref{P-minx1xx1-xlambdaAastu} with $A$ replaced by $B$ and $\lambda$ by $\mu$. The solution is given by Corollary~\ref{C-minx1xx1-xlambdaAastu}, and involves calculating $\delta=1^{T}(\mu^{-1}B)^{\ast}1$, which is used to obtain the least differentiating vector of priorities
\begin{equation*}
x_{2}
=
(\delta^{-1}11^{T}\oplus\mu^{-1}B)^{\ast}u,
\qquad
u>0.
\end{equation*}

We now present two examples, which will illustrate the computational technique involved in the tropical implementation of AHP described above. In the first examples, the matrix $C$ has an essentially unique weight vector $w$ associated with it, which is then used to form the matrix $B$ as in \eqref{e:xB}.

In the second example, the weight vector associated with $C$ is not unique and represented in a parametric form. In this case, we combine the choice of the appropriate weight vector with the solution of the optimization problems to find the most and least differentiating vectors on the next step of the procedure.

\subsection{Vacation site selection example}
\label{S-VSSE}

Consider an example from \cite{Saaty1977Scaling}, where a plan for vacation is to be selected. The places considered are $\mathbf{S}$: short trips from Philadelphia (i.e., New York, Washington, Atlantic City, New Hope, etc.), $\mathbf{Q}$: Quebec, $\mathbf{D}$: Denver, $\mathbf{C}$: California. The problem is to evaluate the places with respect to the following criteria: (1) cost of the trip from Philadelphia, (2) sight-seeing opportunities, (3) entertainment (doing things), (4) way of travel, (5) eating places. 

The comparison matrix of criteria for places is given by
\begin{equation*}
C
=
\begin{pmatrix}
1 & 1/5 & 1/5 & 1 & 1/3
\\
5 & 1 & 1/5 & 1/5 & 1
\\
5 & 5 & 1 & 1/5 & 1
\\
1 & 5 & 5 & 1 & 5
\\
3 & 1 & 1 & 1/5 & 1
\end{pmatrix}.
\end{equation*}

The pairwise comparison matrices of vacation sites with respect to the criteria are defined as follows:
\begin{gather*}
A_{1}
=
\begin{pmatrix}
1 & 3 & 7 & 9
\\
1/3 & 1 & 6 & 7
\\
1/7 & 1/6 & 1 & 3
\\
1/9 & 1/7 & 1/3 & 1
\end{pmatrix},
\qquad
A_{2}
=
\begin{pmatrix}
1 & 1/5 & 1/6 & 1/4
\\
5 & 1 & 2 & 4
\\
6 & 1/2 & 1 & 6
\\
4 & 1/4 & 1/6 & 1
\end{pmatrix},
\\
A_{3}
=
\begin{pmatrix}
1 & 7 & 7 & 1/2
\\
1/7 & 1 & 1 & 1/7
\\
1/7 & 1 & 1 & 1/7
\\
2 & 7 & 7 & 1
\end{pmatrix},
\qquad
A_{4}
=
\begin{pmatrix}
1 & 4 & 1/4 & 1/3
\\
1/4 & 1 & 1/2 & 3
\\
4 & 2 & 1 & 3
\\
3 & 1/3 & 1/3 & 1
\end{pmatrix},
\\
A_{5}
=
\begin{pmatrix}
1 & 1 & 7 & 4
\\
1 & 1 & 6 & 3
\\
1/7 & 1/6 & 1 & 1/4
\\
1/4 & 1/3 & 4 & 1
\end{pmatrix}.
\end{gather*}

To solve the problem, we first evaluate the priorities of criteria. We take the pairwise comparison matrix $C$, and find its tropical spectral radius (its maximum cycle geometric mean). Using \eqref{E-lambda-ai1i2ai2i3aiki1}, we obtain
\begin{equation*}
\lambda
=
(c_{14}c_{43}c_{32}c_{21})^{1/4}
=
5^{3/4}
\approx
3.3437.
\end{equation*}

Furthermore, we consider the matrix
\begin{equation*}
\lambda^{-1}C
=
\begin{pmatrix}
1/\lambda & 1/5\lambda & 1/5\lambda & 1/\lambda & 1/3\lambda
\\
5/\lambda & 1/\lambda & 1/5\lambda & 1/5\lambda & 1/\lambda
\\
5/\lambda & 5/\lambda & 1/\lambda & 1/5\lambda & 1/\lambda
\\
1/\lambda & 5/\lambda & 5/\lambda & 1/\lambda & 5/\lambda
\\
3/\lambda & 1/\lambda & 1/\lambda & 1/5\lambda & 1/\lambda
\end{pmatrix},
\end{equation*}
and calculate its powers to obtain the Kleene star matrix 
\begin{multline*}
(\lambda^{-1}C)^{\ast}
=
I\oplus\lambda^{-1}C\oplus\lambda^{-2}C^{2}\oplus\lambda^{-3}C^{3}\oplus\lambda^{-4}C^{4}
\\
=
\begin{pmatrix}
1 & \lambda/5 & 5/\lambda^{2} & 1/\lambda & 5/\lambda^{2}
\\
5/\lambda & 1 & \lambda/5 & 5/\lambda^{2} & \lambda/5
\\
\lambda^{2}/5 & 5/\lambda & 1 & \lambda/5 & 1
\\
\lambda & \lambda^{2}/5 & 5/\lambda & 1 & 5/\lambda
\\
3/\lambda & 3/5 & 3\lambda/25 & 3/\lambda^{2} & 3\lambda/25
\end{pmatrix}.
\end{multline*}

The columns of the Kleene matrix generate the set of all weight vectors of criteria. Since all columns of this matrix are collinear, any one of them can serve as the weight vector. We take the first column, and use its elements as coefficients to combine the matrices $A_{1},\ldots,A_{5}$ into one matrix
\begin{equation*}
B
=
A_{1}
\oplus
5\lambda^{-1}
A_{2}
\oplus
5^{-1}\lambda^{2}
A_{3}
\oplus
\lambda
A_{4}
\oplus
3\lambda^{-1}
A_{5}
\\
=
\begin{pmatrix}
\lambda & 7\lambda^{2}/5 & 7\lambda^{2}/5 & 9
\\
25/\lambda & \lambda & 6 & 3\lambda
\\
4\lambda & 2\lambda & \lambda & 3\lambda
\\
3\lambda & 7\lambda^{2}/5 & 7\lambda^{2}/5 & \lambda
\end{pmatrix}.
\end{equation*}

We now apply Corollary~\ref{C-minxwkxAkx} to find all priority vectors that correspond to the matrix $B$. Evaluation of the tropical spectral radius (the maximum cycle mean) of $B$ yields
\begin{equation*}
\mu
=
(b_{13}b_{31})^{1/2}
=
2\cdot5\cdot7^{1/2}/\lambda^{1/2}
=
2\cdot5^{5/8}7^{1/2}
\approx
14.4689.
\end{equation*}

Furthermore, we calculate powers of the matrix
\begin{equation*}
\mu^{-1}B
=
\begin{pmatrix}
\lambda/\mu & 7\lambda^{2}/5\mu & 7\lambda^{2}/5\mu & 9/\mu
\\
25/\lambda\mu & \lambda/\mu & 6/\mu & 3\lambda/\mu
\\
4\lambda/\mu & 2\lambda/\mu & \lambda/\mu & 3\lambda/\mu
\\
3\lambda/\mu & 7\lambda^{2}/5\mu & 7\lambda^{2}/5\mu & \lambda/\mu
\end{pmatrix},
\end{equation*}
and combine them to construct the matrix
\begin{equation*}
(\mu^{-1}B)^{\ast}
=
I\oplus\mu^{-1}B\oplus\mu^{-2}B^{2}\oplus\mu^{-3}B^{3}
=
\begin{pmatrix}
1 & \mu/4\lambda & \mu/4\lambda & 3/4
\\
3\lambda/\mu & 1 & 3/4 & 3\lambda/\mu
\\
4\lambda/\mu & 1 & 1 & 3\lambda/\mu
\\
1 & \mu/4\lambda & \mu/4\lambda & 1
\end{pmatrix}
\end{equation*}
whose columns generate all priority vectors for alternatives.

Observing that the first column is collinear with the third, one of them, say the third column, can be removed from the set of generators. Thus, we represent a complete solution as the set of vectors
\begin{equation*}
x
=
Su,
\quad
S
=
\begin{pmatrix}
1 & \mu/4\lambda & 3/4
\\
3\lambda/\mu & 1 & 3\lambda/\mu
\\
4\lambda/\mu & 1 & 3\lambda/\mu
\\
1 & \mu/4\lambda & 1
\end{pmatrix},
\quad
u>0.
\end{equation*}

To find solutions that most differentiate alternatives with the highest and lowest priorities, we apply Corollary~\ref{C-maxx1AxAx1}. We start with the calculation
\begin{align*}
1^{T}s_{1}
&=
1,
&
1^{T}s_{2}
&=
\mu/4\lambda,
&
1^{T}s_{3}
&=
1,
\\
s_{1}^{-}1
&=
\mu/3\lambda,
&
s_{2}^{-}1
&=
1,
&
s_{3}^{-}1
&=
\mu/3\lambda,
\end{align*}
and then obtain
\begin{equation*}
\Delta
=
1^{T}s_{1}s_{1}^{-}1
\oplus
1^{T}s_{2}s_{2}^{-}1
\oplus
1^{T}s_{3}s_{3}^{-}1
=
\mu/3\lambda
=
2\cdot3^{-1}5^{-1/8}7^{1/2}
\approx
1.4424.
\end{equation*}

The condition $1^{T}s_{k}s_{lk}^{-1}=\Delta$ holds if we take the following $(k,l)$ pairs: $(1,2)$, $(3,2)$ and $(3,3)$. 

First, assume that $k=1$ and $l=2$. We form the matrices
\begin{equation*}
S_{21}
=
\begin{pmatrix}
0 & 0 & 0
\\
3\lambda/\mu & 0 & 0
\\
0 & 0 & 0
\\
0 & 0 & 0
\end{pmatrix},
\quad
S_{21}^{-}S
=
\begin{pmatrix}
1 & \mu/3\lambda & 1
\\
0 & 0 & 0
\\
0 & 0 & 0
\end{pmatrix},
\end{equation*}
and then derive the matrix, which generates the most differentiating priority vectors,
\begin{equation*}
S(I\oplus S_{21}^{-}S)
=
\begin{pmatrix}
1 & \mu/3\lambda & 1
\\
3\lambda/\mu & 1 & 3\lambda/\mu
\\
4\lambda/\mu & 4/3 & 4\lambda/\mu
\\
1 & \mu/3\lambda & 1
\end{pmatrix}.
\end{equation*}

Since all columns in the last matrix are collinear to each other, we take one of them, say the first, to write one of the most differentiating solutions as
\begin{equation*}
x_{1}^{\prime}
=
\begin{pmatrix}
1
\\
3\lambda/\mu
\\
4\lambda/\mu
\\
1
\end{pmatrix}
u,
\quad
\lambda
=
5^{3/4},
\quad
\mu
=
2\cdot5^{5/8}7^{1/2},
\quad
u>0.
\end{equation*}

Specifically, by setting $u=1$, we have $x_{1}^{\prime}\approx(1.0000, 0.6933, 0.9244, 1.0000)^{T}$. This vector specifies the order of choices as $\mathbf{C}\equiv\mathbf{S}\succ\mathbf{D}\succ\mathbf{Q}$.

Next, we examine the case where $k=3$ and $l=2$. In a similar way, we obtain the vector 
\begin{equation*}
x_{1}^{\prime\prime}
=
\begin{pmatrix}
3/4
\\
3\lambda/\mu
\\
3\lambda/\mu
\\
1
\end{pmatrix}
u,
\quad
\lambda
=
5^{3/4},
\quad
\mu
=
2\cdot5^{5/8}7^{1/2},
\quad
u>0,
\end{equation*}
which suggests another most differentiating solution.

If $u=1$, then $x_{1}^{\prime\prime}\approx(0.7500, 0.6933, 0.6933, 1.0000)^{T}$, which puts the choices in the order $\mathbf{C}\succ\mathbf{S}\succ\mathbf{D}\equiv\mathbf{Q}$.

It is not difficult to verify that the case with $k=3$ and $l=3$ introduces no other solutions than those already found.

We now turn to an application of Corollary~\ref{C-minx1xx1-xlambdaAastu} to derive the least differentiating vector of priorities; as it will turn out, in this example it is essentially unique. First, we calculate
\begin{equation*}
\delta
=
1^{T}(\mu^{-1}B)^{\ast}1
=
\mu/4\lambda
=
2^{-1}5^{-1/8}7^{1/2}
\approx
1.0818,
\end{equation*}
and then construct the matrix
\begin{equation*}
\delta^{-1}11^{T}
\oplus
\mu^{-1}B
=
\begin{pmatrix}
1/\delta & \delta & \delta & 1/\delta 
\\
1/\delta & 1/\delta & 1/\delta & 1/\delta
\\
1/\delta & 1/\delta & 1/\delta & 1/\delta
\\
1/\delta & \delta & \delta & 1/\delta
\end{pmatrix}.
\end{equation*}

The least differentiating priority vectors are generated by the columns of the Kleene star matrix
\begin{multline*}
(\delta^{-1}11^{T}\oplus\mu^{-1}B)^{\ast}
\\
=
I
\oplus
(\delta^{-1}11^{T}\oplus\mu^{-1}B)
\oplus
(\delta^{-1}11^{T}\oplus\mu^{-1}B)^{2}
\oplus
(\delta^{-1}11^{T}\oplus\mu^{-1}B)^{3}
\\
=
\begin{pmatrix}
1 & \delta & \delta & 1 
\\
1/\delta & 1 & 1 & 1/\delta
\\
1/\delta & 1 & 1 & 1/\delta
\\
1 & \delta & \delta & 1 
\end{pmatrix}.
\end{multline*}

Observing that all columns in the matrix obtained are collinear, we take one of them, say the first, to write the least differentiating solutions as
\begin{equation*}
x_{2}
=
\begin{pmatrix}
1
\\
1/\delta
\\
1/\delta
\\
1
\end{pmatrix}u,
\quad
u>0.
\end{equation*}

Setting $u=1$, we have $x_{2}\approx(1, 0.9244, 0.9244, 1)^{T}$. This vector arranges the alternatives in the order $\mathbf{C}\equiv\mathbf{S}\succ\mathbf{D}\equiv\mathbf{Q}$.

As one can see, all solutions indicate the highest score of the fourth choice (California). The score assigned to the first choice (short trip) is the same or lower. The third choice (Denver) has the same or higher score, than the second choice (Quebec), and both of them always have a lower score than the first. Combining both the most and least differentiating solutions yields the order of choices $\mathbf{C}\succeq\mathbf{S}\succ\mathbf{D}\succeq\mathbf{Q}$.

Note that the results obtained above with the tropical implementation of AHP are quite different from those offered by the classical AHP method. Specifically, the order of choices, found in \cite{Saaty1977Scaling}, is $\mathbf{S}\succ\mathbf{D}\succ\mathbf{C}\succ\mathbf{Q}$.

\subsection{School selection example}
\label{S-SSE}

As another example, we investigate a problem in \cite{Saaty1977Scaling,Saaty1990Analytic} to rank three high schools $\mathbf{A}$, $\mathbf{B}$ and $\mathbf{C}$, according to the following characteristics (criteria): (1) learning, (2) friends, (3) school life, (4) vocational training, (5) college preparation, (6) music classes.

The results of pairwise comparison of criteria are given by the matrix
\begin{equation*}
C
=
\begin{pmatrix}
1 & 4 & 3 & 1 & 3 & 4
\\
1/4 & 1 & 7 & 3 & 1/5 & 1
\\
1/3 & 1/7 & 1 & 1/5 & 1/5 & 1/6
\\
1 & 1/3 & 5 & 1 & 1 & 1/3
\\
1/3 & 5 & 5 & 1 & 1 & 3
\\
1/4 & 1 & 6 & 3 & 1/3 & 1
\end{pmatrix}.
\end{equation*}

The matrices of pairwise comparison of schools for each criterion take the following forms:
\begin{gather*}
A_{1}
=
\begin{pmatrix}
1 & 1/3 & 1/2
\\
3 & 1 & 3
\\
2 & 1/3 & 1
\end{pmatrix},
\quad
A_{2}
=
\begin{pmatrix}
1 & 1 & 1
\\
1 & 1 & 1
\\
1 & 1 & 1
\end{pmatrix},
\quad
A_{3}
=
\begin{pmatrix}
1 & 5 & 1
\\
1/5 & 1 & 1/5
\\
1 & 5 & 1
\end{pmatrix},
\\
A_{4}
=
\begin{pmatrix}
1 & 9 & 7
\\
1/9 & 1 & 1/5
\\
1/7 & 5 & 1
\end{pmatrix},
\quad
A_{5}
=
\begin{pmatrix}
1 & 1/2 & 1
\\
2 & 1 & 2
\\
1 & 1/2 & 1
\end{pmatrix},
\quad
A_{6}
=
\begin{pmatrix}
1 & 6 & 4
\\
1/6 & 1 & 1/3
\\
1/4 & 3 & 1
\end{pmatrix}.
\end{gather*}

The solution of the problem involves evaluation of the priority vectors that most and least differentiate the schools with the highest and lowest priorities. To find the most differentiating solution, we first obtain a parametric description of all weight vectors from the pairwise comparison matrix of criteria. The components of the weight vector are used to form a weighted sum of comparison matrices of schools for each criteria. Next, the priority vectors for schools are evaluated based on the sum of matrices with parameterized weights. Finally, those priority vectors, which most and least differentiate the schools with the highest and lowest scores, are taken as the most and leats differentiating solutions to the problem.

As the first step, we need to obtain the priority vectors for criteria, which specify the weights of the criteria. We evaluate the spectral radius of the matrix $C$ by using \eqref{E-lambda-ai1i2ai2i3aiki1} to write 
\begin{equation*}
\lambda
=
(c_{15}c_{52}c_{24}c_{41})^{1/4}
=
3^{1/2}5^{1/4}
\approx
2.5900.
\end{equation*}

We calculate the first five powers of the matrix
\begin{equation*}
\lambda^{-1}C
=
\begin{pmatrix}
1/\lambda & 4/\lambda & 3/\lambda & 1/\lambda & 3/\lambda & 4/\lambda
\\
1/4\lambda & 1/\lambda & 7/\lambda & 3/\lambda & 1/5\lambda & 1/\lambda
\\
1/3\lambda & 1/7\lambda & 1/\lambda & 1/5\lambda & 1/5\lambda & 1/6\lambda
\\
1/\lambda & 1/3\lambda & 5/\lambda & 1/\lambda & 1/\lambda & 1/3\lambda
\\
1/3\lambda & 5/\lambda & 5/\lambda & 1/\lambda & 1/\lambda & 3/\lambda
\\
1/4\lambda & 1/\lambda & 6/\lambda & 3/\lambda & 1/3\lambda & 1/\lambda
\end{pmatrix},
\end{equation*}
and then combine these powers to derive the Kleene star matrix
\begin{multline*}
(\lambda^{-1}C)^{\ast}
=
I\oplus\lambda^{-1}C\oplus\lambda^{-2}C^{2}\oplus\lambda^{-3}C^{3}\oplus\lambda^{-4}C^{4}\oplus\lambda^{-5}C^{5}
\\
=
\begin{pmatrix}
1 & \lambda^{2}/3 & 7\lambda/3 & \lambda & 3/\lambda & 4/\lambda
\\
3/\lambda^{2} & 1 & 7/\lambda & 3/\lambda & \lambda/5 & 4\lambda/15
\\
1/3\lambda & \lambda/9 & 1 & 1/3 & 1/\lambda^{2} & 4/3\lambda^{2}
\\
1/\lambda & \lambda/3 & 7/3 & 1 & 3/\lambda^{2} & 4/\lambda^{2}
\\
\lambda/3 & 5/\lambda & 7\lambda^{2}/9 & \lambda^{2}/3 & 1 & 4/3
\\
3/\lambda^{2} & 1 & 7/\lambda & 3/\lambda & \lambda/5 & 1
\end{pmatrix}.
\end{multline*}

Note that the first, second, fourth and fifth columns in the matrix $(\lambda^{-1}C)^{\ast}$ are collinear, and thus all of them but one, say the fourth, can be omitted. We combine the fourth column together with the third multiplied by $3/7$ and the sixth multiplied by $\lambda^{2}/4$ to obtain the generating matrix, and introduce the vector of parameters $v=(v_{1},v_{2},v_{3})^{T}>0$ to represent the weight vector in parametric form as 
\begin{equation*}
w
=
\begin{pmatrix}
\lambda & \lambda & \lambda
\\
3/\lambda & 3/\lambda & 3/\lambda
\\
3/7 & 1/3 & 1/3
\\
1 & 1 & 1
\\
\lambda^{2}/3 & \lambda^{2}/3 & \lambda^{2}/3
\\
3/\lambda & 3/\lambda & \lambda^{2}/4
\end{pmatrix}
v,
\quad
v>0.
\end{equation*}

We now use the components of the vector $w$ as weights to combine the matrices $A_{1},\ldots,A_{6}$ into the matrix
\begin{multline*} 
B
=
\lambda(v_{1}\oplus v_{2}\oplus v_{3})
A_{1}
\oplus
3\lambda^{-1}(v_{1}\oplus v_{2}\oplus v_{3})
A_{2}
\oplus
(3\cdot7^{-1}v_{1}\oplus3^{-1}v_{2}\oplus3^{-1}v_{3})
A_{3}
\\
\oplus
(v_{1}\oplus v_{2}\oplus v_{3})
A_{4}
\oplus
3^{-1}\lambda^{2}(v_{1}\oplus v_{2}\oplus v_{3})
A_{5}
\oplus
(3\lambda^{-1}v_{1}
\oplus
3\lambda^{-1}v_{2}
\oplus
4^{-1}\lambda^{2}v_{3})
A_{6}
\\
=
\begin{pmatrix}
\lambda(v_{1}\oplus v_{2}\oplus v_{3}) & 9(v_{1}\oplus v_{2})\oplus3\lambda^{2}v_{3}/2 & 7(v_{1}\oplus v_{2}\oplus v_{3})
\\
3\lambda(v_{1}\oplus v_{2}\oplus v_{3}) & \lambda(v_{1}\oplus v_{2}\oplus v_{3}) & 3\lambda(v_{1}\oplus v_{2}\oplus v_{3})
\\
2\lambda(v_{1}\oplus v_{2}\oplus v_{3}) & 5(v_{1}\oplus v_{2})\oplus3\lambda^{2}v_{3}/4 & \lambda(v_{1}\oplus v_{2}\oplus v_{3})
\end{pmatrix}.
\end{multline*} 

Observing that the parameters $v_{1}$ and $v_{2}$ occur in all entries of the matrix in the form of the sum $v_{1}\oplus v_{2}$, we change the variables by replacing this sum by $v_{1}$ and $v_{3}$ by $v_{2}$ to rewrite the matrix in the more simple form
\begin{equation*} 
B
=
\begin{pmatrix}
\lambda(v_{1}\oplus v_{2}) & 9v_{1}\oplus3\lambda^{2}v_{2}/2 & 7(v_{1}\oplus v_{2})
\\
3\lambda(v_{1}\oplus v_{2}) & \lambda(v_{1}\oplus v_{2}) & 3\lambda(v_{1}\oplus v_{2})
\\
2\lambda(v_{1}\oplus v_{2}) & 5v_{1}\oplus3\lambda^{2}v_{2}/4 & \lambda(v_{1}\oplus v_{2})
\end{pmatrix}.
\end{equation*} 

Furthermore, we take the matrix $B$ to derive all solutions by using Corollary~\ref{C-minxwkxAkx}. Evaluation of the spectral radius of the matrix $B$ yields
\begin{equation*}
\mu
=
(b_{12}b_{21})^{1/2}
=
(3\lambda(v_{1}\oplus v_{2})(9v_{1}\oplus3\lambda^{2}v_{2}/2))^{1/2}.
\end{equation*}

We consider the matrix $\mu^{-1}B$ and calculate the Kleene star matrix
\begin{multline*}
(\mu^{-1}B)^{\ast}
=
I\oplus\mu^{-1}B\oplus\mu^{-2}B^{2}
\\
=
\begin{pmatrix}
1 & \mu/3\lambda(v_{1}\oplus v_{2}) & 1
\\
3\lambda(v_{1}\oplus v_{2})/\mu & 1 & 3\lambda(v_{1}\oplus v_{2})/\mu
\\
2\lambda(v_{1}\oplus v_{2})/\mu & 2/3 & 1
\end{pmatrix}.
\end{multline*}

As the first two columns of the obtained matrix are collinear, we take one of them, say the second, to write the solution in the form
\begin{equation*}
x
=
Su,
\quad
S
=
\begin{pmatrix}
\mu/3\lambda(v_{1}\oplus v_{2}) & 1
\\
1 & 3\lambda(v_{1}\oplus v_{2})/\mu
\\
2/3 & 1
\end{pmatrix},
\quad
u>0.
\end{equation*}

Since the solution is not unique up to a positive factor, we need to find the vectors, which most and least differentiate between the alternatives with the highest and lowest priorities. We begin with the application of Corollary~\ref{C-maxx1AxAx1} to obtain the most differentiating solutions of the problem. First, we note that $\mu=(3\lambda(v_{1}\oplus v_{2})(9v_{1}\oplus3\lambda^{2}v_{2}/2))^{1/2}>3\lambda(v_{1}\oplus v_{2})$, and calculate
\begin{align*}
1^{T}s_{1}
&=
\mu/3\lambda(v_{1}\oplus v_{2}),
&
1^{T}s_{2}
&=
1,
\\
s_{1}^{-}1
&=
3/2,
&
s_{2}^{-}1
&=
\mu/3\lambda(v_{1}\oplus v_{2}).
\end{align*}

Next, we have to find vectors $v$ that maximize
\begin{equation*}
\Delta_{v}
=
1^{T}s_{1}s_{1}^{-}1
\oplus
1^{T}s_{2}s_{2}^{-}1
=
\mu/2\lambda(v_{1}\oplus v_{2})
=
\left(\frac{3(9v_{1}\oplus3\lambda^{2}v_{2}/2)}{4\lambda(v_{1}\oplus v_{2})}\right)^{1/2}.
\end{equation*}

Observing that $9<3\lambda^{2}/2$, we see that the maximum of $\Delta_{v}$ is attained if and only if $v_{2}\geq v_{1}$, and equal to $\Delta=(9\lambda/8)^{1/2}$. In this case, we have
\begin{equation*}
\mu
=
(9\lambda^{3}/2))^{1/2}v_{2},
\end{equation*}
whereas the matrix $S$ becomes
\begin{equation*}
S
=
\begin{pmatrix}
(\lambda/2)^{1/2} & 1
\\
1 & (2/\lambda)^{1/2}
\\
2/3 & 1
\end{pmatrix}.
\end{equation*}

The condition $1^{T}s_{k}s_{lk}^{-1}=\Delta$ holds if we take $k=1$ and $l=3$. According to Corollary~\ref{C-maxx1AxAx1}, we construct the matrices
\begin{equation*}
S_{31}
=
\begin{pmatrix}
0 & 0
\\
0 & 0
\\
2/3 & 0
\end{pmatrix},
\qquad
S_{31}^{-}S
=
\begin{pmatrix}
1 & 3/2
\\
0 & 0
\end{pmatrix},
\end{equation*}
and then calculate the generating matrix
\begin{equation*}
S(I\oplus S_{31}^{-}S)
=
\begin{pmatrix}
(\lambda/2)^{1/2} & 3(\lambda/2)^{1/2}/2
\\
1 & 3/2
\\
2/3 & 1
\end{pmatrix}.
\end{equation*}

Since both columns of the generating matrix are collinear, we take the first column to write the solution
\begin{equation*}
x_{1}
=
\begin{pmatrix}
(\lambda/2)^{1/2}
\\
1
\\
2/3
\end{pmatrix}
u,
\quad
\lambda
=
3^{1/2}5^{1/4}
\approx
2.5900,
\quad
u>0.
\end{equation*}

With $u=(\lambda/2)^{-1/2}$, we have the vector of ratings $x_{1}\approx(1,0.8787,0.5858)^{T}$, which gives the order $\mathbf{A}\succ\mathbf{B}\succ\mathbf{C}$.

Let us derive the least differentiating solution by using Corollary~\ref{C-minx1xx1-xlambdaAastu}. We take the matrix $\mu^{-1}B$ and calculate
\begin{equation*}
\delta_{v}
=
1^{T}(\mu^{-1}B)^{\ast}1
=
\mu/3\lambda(v_{1}\oplus v_{2})
=
\left(\dfrac{9v_{1}\oplus3\lambda^{2}v_{2}/2}{3\lambda(v_{1}\oplus v_{2})}\right)^{1/2}.
\end{equation*}

To find the minimum of $\delta_{v}$ with respect to $v$, consider two cases. First, assume that $v_{1}\leq v_{2}$. Observing that $3\lambda^{2}/2>9$, we have
\begin{equation*} 
\delta_{v}
=
2^{-1/2}\lambda^{1/2}
\approx
1.1380.
\end{equation*}

If $v_{1}>v_{2}$, we obtain the lower bound
\begin{equation*} 
\delta_{v}
=
(3/\lambda
\oplus
\lambda v_{2}/2v_{1})^{1/2}
\geq
3^{1/2}\lambda^{-1/2}
=
3^{1/4}5^{-1/8}
\approx
1.0762.
\end{equation*}

This bound is achieved if $v_{2}\leq6v_{1}/\lambda^{2}$, and thus is the minimum $\delta$ under consideration.

Finally, under the condition $v_{2}\leq6v_{1}/\lambda^{2}<v_{1}$, we have
\begin{equation*} 
B
=
v_{1}
\begin{pmatrix}
\lambda & 9 & 7
\\
3\lambda & \lambda & 3\lambda
\\
2\lambda & 5 & \lambda
\end{pmatrix},
\quad
\mu
=
3^{3/2}\lambda^{1/2}v_{1},
\quad
\delta
=
3^{1/2}\lambda^{-1/2}
=
9v_{1}/\mu.
\end{equation*} 

The matrix $\mu^{-1}B$ takes the form
\begin{equation*} 
\mu^{-1}B
=
\begin{pmatrix}
\lambda^{1/2}/3^{3/2} & 3^{1/2}/\lambda^{1/2} & 7/3^{3/2}\lambda^{1/2}
\\
\lambda^{1/2}/3^{1/2} & \lambda^{1/2}/3^{3/2} & \lambda^{1/2}/3^{1/2}
\\
2\lambda^{1/2}/3^{3/2} & 5/3^{3/2}\lambda^{1/2} & \lambda^{1/2}/3^{3/2}
\end{pmatrix}
=
\begin{pmatrix}
1/3\delta & \delta & 7\delta/9
\\
1/\delta & 1/3\delta & 1/\delta
\\
2/3\delta & 5\delta/9 & 1/3\delta
\end{pmatrix}.
\end{equation*}

Furthermore, we construct the matrix
\begin{equation*}
\delta^{-1}11^{T}
\oplus
\mu^{-1}B
=
\delta^{-1}11^{T}
\oplus
\begin{pmatrix}
1/3\delta & \delta & 7\delta/9
\\
1/\delta & 1/3\delta & 1/\delta
\\
2/3\delta & 5\delta/9 & 1/3\delta
\end{pmatrix}
=
\begin{pmatrix}
1/\delta & \delta & 1/\delta
\\
1/\delta & 1/\delta & 1/\delta
\\
1/\delta & 1/\delta & 1/\delta
\end{pmatrix},
\end{equation*}
and then find the matrix
\begin{multline*}
(\delta^{-1}11^{T}\oplus\mu^{-1}B)^{\ast}
=
I
\oplus
(\delta^{-1}11^{T}\oplus\mu^{-1}B)
\oplus
(\delta^{-1}11^{T}\oplus\mu^{-1}B)^{2}
\\
=
\begin{pmatrix}
1 & \delta & 1
\\
1/\delta & 1 & 1/\delta
\\
1/\delta & 1 & 1
\end{pmatrix}.
\end{multline*}

Since the first two columns in the matrix are collinear, we take the first one to write a solution, which least differentiate the alternatives, as
\begin{equation*}
x_{2}^{\prime}
=
\begin{pmatrix}
1
\\
1/\delta
\\
1/\delta
\end{pmatrix}
u,
\quad
\delta
=
3^{1/4}5^{-1/8},
\quad
u>0.
\end{equation*}

With $u=1$, we have the vector $x_{2}^{\prime}\approx(1,0.9292,0.9292)^{T}$, which produces the order $\mathbf{A}\succ\mathbf{C}\equiv\mathbf{B}$.

The third column in the matrix presents another solution
\begin{equation*}
x_{2}^{\prime\prime}
=
\begin{pmatrix}
1
\\
1/\delta
\\
1
\end{pmatrix}
u,
\quad
\delta
=
3^{1/4}5^{-1/8},
\quad
u>0,
\end{equation*}
which for $u=1$ becomes $x_{2}^{\prime\prime}\approx(1,0.9292,1)^{T}$, and thus defines the order $\mathbf{A}\equiv\mathbf{C}\succ\mathbf{B}$.

By combining all least differentiating solutions, we put the schools in the order $\mathbf{A}\succeq\mathbf{C}\succeq\mathbf{B}$. Note that this result, as well as the most differentiating solution, which produces the order $\mathbf{A}\succ\mathbf{B}\succ\mathbf{C}$, significantly differ from that obtained by the traditional AHP in \cite{Saaty1977Scaling,Saaty1990Analytic} and given by $\mathbf{B}\succ\mathbf{A}\succ\mathbf{C}$.

\section{Conclusion and Discussion}

In the paper, we have developed a new approach to solve multi-criteria decision problems of ranking the priorities of choices from pairwise comparison judgments. The approach mainly follows the general AHP methodology, but offers a new analytical and computational framework based on tropical optimization to solve the problems in a different way. The new approach offers an exact direct solution to the problems in analytical form, and may have the potential to complement and supplement other AHP solutions.

The main differences between the proposed and traditional approaches are as follows. First, to approximate pairwise comparison matrices by consistent matrices, the new AHP applies rank-one matrix approximation in the log-Chebyshev sense instead of the approximation in Frobenius (or spectral) norm in the traditional AHP. Using the log-Chebyshev approximation yields the solution in the form of the tropical subeigenvectors of pairwise comparison matrices rather than the usual Perron vector of these matrices, provided by the traditional AHP. Note that the log-Chebyshev approximation is equivalent to minimizing the maximum relative error over the matrix entries. Therefore, this approximation technique seems to be quite reasonable to handle pairwise comparison matrices that consists of reciprocal entries with their values covering a wide range of magnitude. 

Furthermore, given the weights of criteria, the new AHP finds final priorities of choices by solving one optimization problem of the minimax weighted log-Chebyshev approximation rather than by obtaining separate solutions to the Frobenius approximation problems for each criteria and calculating the weighted sum of these solutions. The proposed minimax solution incorporates the weights into the objective function of the optimization problem, which provides a more general and comprehensive solution technique than that based on the direct calculation of the weighted sum. Specifically, this technique may result in a set of different solution vectors instead of a single solution in the traditional AHP, and thereby enhances the decision-making capabilities by extending the range of effective choice.

To simplify the analysis and interpretation of non-unique solutions, the whole set of priority vectors is characterized by the vectors, which most and least differentiate between alternatives with the highest and lowest priorities. The most and least differentiating vectors are found by maximizing and minimizing the Hilbert (span, range) seminorm of priority vectors in the solution set.

A key feature of the new approach is its close connection with tropical optimization, which results in a strong possibility to formulate all decision-making procedures as tropical optimization problems, and then to solve these problems directly using results available in the area of tropical optimization. In contrast to the traditional AHP, which involves numerical algorithms to calculate priority vectors, the application of tropical optimization yields analytical solutions, which describe all priority vectors in a compact vector form, ready for both formal analysis and immediate computations.  

Let us now consider the examples presented in the paper to discuss the difference between the outcomes of the traditional AHP and the tropical AHP, which we are suggesting here. Matrix $B$, from which the final vector of priorities results, is computed as the max-linear combination of the matrices $A_{i}$ for all criteria multiplied by the corresponding weight. In the vacation site selection example, all but two entries of this matrix come from $A_{3}$ (entertainment) and $A_{4}$ (way of travel), so all other criteria are not so important. Note that $A_{3}$ clearly ranks California better than short trips and both of them much better than the other two alternatives. The key entries of $A_{3}$ are those equal to $7$ and they ``survive'' (multiplied by some factors) in $B$ and in $(\mu^{-1}B)^{*}$. This is the main reason why the ranking of $A_{3}$ is also the final ranking, although the preference of California over Denver and Quebec is not so overwhelming as in $A_{3}$, due to some admixture from $A_{4}$ and other matrices. In contrast to this result, the traditional AHP ranks California third.

In the school selection example, six entries of $B$ come from $A_{1}$ (learning) and three entries come from $A_{4}$ (vocational training) with some help of $A_{6}$ (music classes). Friends, school life and college preparation are completely ruled out. This seems reasonable, since school life is unimportant, making friends is the same in all the schools and the ranking of $A_{5}$ (college preparation) is similar to that of learning but less important and less distinguishing between the schools. The second school is the champion in learning, but the first school is much better in vocational training and music lessons, which, in the end, makes it the winner, albeit with a small margin even for the most differentiating vector and a possibility of being on the par with the third school for one of the least differentiating vectors. The outcome of the traditional AHP is the same as if we judged the schools first with respect to learning and college preparation (which puts the second school first) and only then with respect to vocational training and music lessons (which decides between the remaining two schools).

Contemplating these examples and thinking of a general case, we see that the tropical AHP picks the highest entries of the matrices $A_{i}$ (with the corresponding weights) resulting in matrix $B$ on which the final comparison is based. Unlike the traditional AHP, the unimportant criteria and the criteria, which distinguish between the alternatives too weakly, are dispensed with. Also, a criterion $i$, which rates one alternative higher, can win over any number of other criteria which rate another alternative higher, if $A_{i}$ has high enough entries. After matrix $B$ is formed, we deal with a solution set to a Chebyshev approximation problem, in which we minimize the largest deviation between the logarithms of entries of $B$ and the logarithms of entries of a rank-one comparison matrix. From this solution set, we pick the solutions that most and least differentiate between the alternatives, and this allows us to see (unlike the vector produced by the traditional AHP) a whole set of reasonable priority vectors and rankings and, in particular, to observe when the judgments based on different criteria are in conflict with each other, and there is no clear winner among the alternatives. 

The school selection example shows that the new approach may lead to a set of weight vectors that represents the weights of criteria in parametric form as the whole tropical column span of the matrix $(\lambda^{-1}C)^{*}$. This involves the derivation of the most and least differentiating priority vectors based on solving parameterized maximization and optimization problems, which may be a difficult task. Another alternative would be to find the sets of the most, least and average (fair) differentiating vectors from the tropical column span of $(\lambda^{-1}C)^{*}$, but it is not clear whether this would necessarily lead to the most, least, and fair differentiating priority vectors in the end.   

Finally, we remark that it is possible to combine the traditional and tropical AHP approaches, for instance, by using the usual Perron eigenvector as the vector of criteria weights or by consistently using the geometric column barycenter of Kleene stars on both levels of the traditional AHP.

\section{Acknowledgements}

This work was supported by the Russian Foundation of Basic Research (RFBR) [grant number 18-010-00723] and the Engineering and Physical Sciences Research Council (EPSRC) [grant number EP/P019676/1].

The authors are grateful to the referees for valuable criticism of the initial versions of this paper, and, in particular, for suggesting to apply the tropical AHP to Saaty’s school selection example.












\end{document}